\newtheorem{defi}{Definition}[section] 
\newtheorem{teo}[defi]{Theorem}
\newtheorem{cor}[defi]{Corollary} 
\newtheorem{lemma}[defi]{Lemma}
\newtheorem{prop}[defi]{Proposition}
\newtheorem{oss}[defi]{Remark}
\newcommand{\Pp}{\mathbb{P}}
\newcommand{\R}{\mathbb{R}}
\newcommand{\h}{\mathbb{H}}
\newcommand{\SL}{\mathrm{SL}}
\newcommand{\Ker}{\mathrm{Ker}}
\newcommand{\dPSL}{\mathbb{P}SL(2,\mathbb{R})\times \mathbb{P}SL(2,\mathbb{R})}
\newcommand{\PSL}{\mathbb{P}SL}
\newcommand{\T}{\text{Teich}}
\newcommand{\dT}{\T(S) \times \T(S)}
\newcommand{\Ima}{\mathrm{Im}}
\newcommand{\Isom}{\mathrm{Isom}}
\newcommand{\LHdim}{\mathcal{LH}\mathrm{dim}}
\newcommand{\Imm}{\mathcal{I}m}
\newcommand{\Id}{\mathrm{Id}}
\newcommand{\trace}{\mathrm{trace}}
\newcommand{\length}{\mathrm{length}}
\DeclareMathAlphabet{\mathpzc}{OT1}{pzc}{m}{it}
\title[Degeneration of AdS structures along rays]{Degeneration of globally hyperbolic maximal anti-de Sitter structures along rays}
\author{Andrea Tamburelli}
\date{\today}
\thanks{}
\begin{document}
\maketitle

\begin{abstract} 
Using the parameterisation of the deformation space of GHMC anti-de Sitter structures on $S \times \R$ by the cotangent bundle of the Teichm\"uller space of a closed surface $S$, we study how some geometric quantities, such as the Lorentzian Hausdorff dimension of the limit set, the width of the convex core and the H\"older exponent, degenerate along rays of cotangent vectors.    
\end{abstract}

{\let\thefootnote\relax\footnotetext{{Mathematics Subject Classification: 53A10, 53A35, 53C21, 53C42.}}

\section*{Introduction} 
Anti-de Sitter geometry has grown in interest after the pioneering work of Mess (\cite{Mess}), who pointed out many similarities with hyperbolic geometry and connections with Teichm\"uller theory. Moreover, anti-de Sitter geometry is a useful tool for the study of representations of the fundamental group of a closed, connected, oriented surface $S$ into $\mathbb{P}SO_{0}(2,2) \cong \dPSL$: for instance, representations $\rho=(j, \sigma)$, where $j$ is discrete and faithful and $\sigma$ dominates $j$, are the holonomy representations of closed anti-de Sitter manifolds, diffeomorphic to a circle bundle over $S$ (\cite{Kassel_thesis}, \cite{Salein}), whereas representations $\rho=(\rho_{l}, \rho_{r})$ in which both factors are discrete and faithful correspond to globally hyperbolic maximally Cauchy-compact (GHMC) anti-de Sitter structures on $S \times \R$ (\cite{Mess}). \\
\indent Mess thus deduced that the deformation space of GHMC anti-de Sitter structures on $S \times \R$ can be parameterised by $\dT$. More recently, using the unique maximal surface (i.e. with vanishing mean curvature) embedded in every such manifold (\cite{foliationCMC}, see also \cite{bon_schl}, \cite{toulisse}, \cite{TambuCMC}, \cite{Tambu_regularAdS}, \cite{Tambu_wildAdS} for generalisations), Krasnov and Schlenker found a new parameterisation in terms of the cotangent bundle of the Teichm\"uller space of $S$: a point $(h,q) \in T^{*}\T(S)$ corresponds to the GHMC anti-de Sitter manifold containing a maximal surface whose induced metric is conformal to $h$ and whose second fundamental form is determined by $q$. Since the maximal surface is also a Cauchy surface, in the sense that it intersects every causal curve in exactly one point, Krasnov and Schlenker's parameterisation is an instance of the Choquet-Bruhat theorem for Einstein's equation and the fact that the first and second fundamental form of a Cauchy hypersurface determines the space-time. \\
\\
\indent The main purpose of this paper is to study how relevant quantities that describe the geometry of these manifolds behave along rays of quadratic differentials. The first result is about the Lorentzian Hausdorff dimension of the limit set, recently introduced by Glorieux and Monclair (\cite{GM_Hdim}):\\
\\
{\bf Theorem 3.13.}{\it \ Let $M_{t}$ be the family of GHMC anti-de Sitter manifolds parameterised by $(h,tq_{1}) \in T^{*}\T(S)$, for a fixed non-zero holomorphic quadratic differential $q_{1}$. Then the Lorentzian Hausdorff dimension of the limit set tends to $0$ if $t$ goes to $+\infty$.}\\
\\
We remark that the behaviour of the Lorentzian Hausdorff dimension along other diverging sequences of GHMC anti-de Sitter structures has also been studied in \cite{Olivier_exponent}. However, in this paper we use a completely different approach. For instance, the proofs rely on the comparison between the critical exponent defined through the exponential growth rate of a point in the orbit of $\pi_{1}(S)$ with respect to the Riemannian metric induced on the maximal surface and that defined with respect to a Lorentzian distance on the convex core of $M$, introduced in \cite{GM_Hdim}. In particular, we obtain results about the asymptotic behaviour of the induced metric on the maximal surface:\\
\\
{\bf Proposition 3.10.}{\it \ Let $M_{t}$ be the family of GHMC anti-de Sitter manifolds parameterised by $(h, tq_{1}) \in T^{*}\T(S)$, for a fixed non-zero holomorphic quadratic differential $q_{1}$. Then the induced metric $I_{t}$ on the unique maximal surface embedded in $M_{t}$ satisfies
\[
	\frac{I_{t}}{t} \rightarrow |q_{1}| \ \ \ \  \text{for} \  t \to +\infty
\]
outside the zeros of $q_{1}$, monotonically from above. }\\
\\
From this estimate, we deduce several interesting consequences: the critical exponent for the induced metric on the unique maximal surface embedded in $M_{t}$ strictly decreases to $0$ along the ray (Proposition \ref{prop:entropy_decresce}), and the principal curvatures of the maximal surface have also a precise behaviour: \\
\\
{\bf Corollary 3.11.}{ \it \ The positive principal curvature of the maximal surface embedded in $M_{t}$ monotonically increases to $1$, outside the zeros of $q_{1}$ }. \\
\\
Together with previous results by Seppi (\cite{seppimaximal}), we deduce also the asymptotic behaviour of the width of the convex core of the family $M_{t}$:\\
\\
{\bf Proposition 4.3.}{\it \ The width of the convex core of $M_{t}$ tends to $\pi/2$ if $t$ goes to $+\infty$.}\\
\\
\indent Finally, we introduce another interesting quantity: the H\"older exponent of a GHMC anti-de Sitter manifold. This is defined as follows: if $M$ corresponds to the pair $(\rho_{l}, \rho_{r}) \in \dT$ in Mess' parameterisation, the H\"older exponent $\alpha(M)$ is the best H\"older exponent of a homeomorphism $\phi:\R\Pp^{1} \rightarrow \R\Pp^{1}$ such that $\rho_{r}(\gamma)\circ \phi=\phi \circ \rho_{l}(\gamma)$ for every $\gamma \in \pi_{1}(S)$. In Proposition \ref{prop:formulanuova} we give a geometric interpretation of this quantity: for every $\gamma \in \pi_{1}(S)$, the isometry $\rho(\gamma)=(\rho_{l}(\gamma), \rho_{r}(\gamma))$ leaves two space-like geodesics invariant. On each of them $\rho$ acts by translation and the infimum over all $\gamma$ of the ratio between the difference and the sum of the two translation lengths coincides with the H\"older exponent. We prove the following:\\
\\
{\bf Theorem 2.7.}{\it \ Let $M_{t}$ be the family of GHMC anti-de Sitter manifolds parameterised by $(h,tq_{1}) \in T^{*}\T(S)$. Then $\alpha(M_{t})$ tends to $0$ if $t$ goes to $+\infty$.}\\
\\
\indent It seems more challenging to find coarse estimates for these geometric quantities, in terms of functions depending only on the two points in the Teichm\"uller space of $S$ in Mess' parameterisation, like the Weil-Petersson distance, Thurston's asymmetric distance or $L^{p}$-energies, as obtained for the volume of GHMC anti-de Sitter manifolds in \cite{volumeAdS}. We leave these questions for future work.

\subsection*{Outline of the paper}In Section \ref{sec:background}, we review the basic theory of GHMC anti-de Sitter $3$-manifolds.  The H\"older exponent is studied in Section \ref{sec:holder}. Section \ref{sec:entropy} deals with the behaviour of the Lorentzian Hausdorff dimension. In Section \ref{sec:width} we focus on the width of the convex core.

\section{Background}\label{sec:background}
In this section we recall the basic theory of anti-de Sitter geometry. Good references for the material covered here are \cite{Mess}, \cite{bebo} and \cite{foliationCMC}.\\
\\
\indent Anti-de Sitter space is a Lorentzian manifold diffeomorphic to a solid torus, with constant sectional curvature $-1$. A convenient model for our purposes is the following. Consider the quadratic form $\eta=-\det$ on the vector space $\mathfrak{gl}(2,\R)$ of $2$-by-$2$ matrices. By polarisation, $\eta$ induces a bilinear form of signature $(2,2)$ and its restriction to the submanifold defined by the equation $\eta=-1$ is a Lorentzian metric on $\SL(2, \R)$. Since it is invariant by multiplication by $-\Id$, it defines a Lorentzian metric on 
\[
	\PSL(2,\R)=\SL(2, \R)/\{\pm \Id\} \ .
\]
The projective model of $3$-dimensional anti-de Sitter space $AdS_{3}$ is thus $\PSL(2,\R)$ endowed with this Lorentzian metric. It is orientable and time-orientable, and its group of orientation and time-orientation preserving isometries is 
\[
	\Isom_{0}(AdS_{3})=\PSL(2,\R) \times \PSL(2,\R)\ ,
\]
where the action of an element $(\alpha, \beta) \in \Isom_{0}(AdS_{3})$ is given by
\[
	(\alpha, \beta)\cdot \gamma= \alpha\gamma\beta^{-1} \ \ \ \forall \ \gamma \in AdS_{3} \ .
\]
The boundary at infinity of $AdS_{3}$ is defined as the projectivisation of rank $1$ matrices:
\[
	\partial_{\infty}AdS_{3}=\Pp\{ \alpha  \in \mathfrak{gl}(2, \R) \ | \ \det(\alpha)=0, \ \alpha \neq 0 \} \ .
\]
We can identify $\partial_{\infty}AdS_{3}$ with $\R\Pp^{1}\times \R\Pp^{1}$ in the following way: an equivalence class of a rank $1$ matrix $M$ is sent to the pair of lines $(\Ima(M), \Ker(M))$. In this way, the action of $\Isom_{0}(AdS_{3})$ extends to the boundary at infinity and corresponds to the obvious action of $\dPSL$ on $\R\Pp^{1}\times \R\Pp^{1}$.\\
\\
Thinking of $AdS_{3} \subset \R\Pp^{3}$, geodesics are obtained as intersection between projective lines and $AdS_{3}$. 
In particular, geodesics through $\Id \in \PSL(2, \R)$ are $1$-parameter subgroups.\\
\\
\indent Projective duality (or, equivalently, orthogonality with respect to the quadratic form $\eta$) induces a duality between points and space-like planes in $AdS_{3}$. Given a space-like plane $P$, the time-like geodesics orthogonal to $P$ intersect at a point $P^{*}$ after a time $\pi/2$. Viceversa, given a point $p$, every time-like geodesic starting at $p$ intersects a unique space-like plane orthogonally after a time $\pi/2$. 

\subsection{GHMC anti-de Sitter manifolds}
We are interested in a special class of manifolds locally isometric to $AdS_{3}$. \\
\\
\indent We say that an anti-de Sitter three manifold $M$ is Globally Hyperbolic Maximally Cauchy-compact (GHMC) if it contains an embedded closed, oriented surface $S$ that intersects every inextensible causal curve in exactly one point, and if $M$ is maximal by isometric embeddings sending a Cauchy surface to a Cauchy surface. It turns out that $M$ is necessarily diffeomorphic to a product $S \times \R$ (\cite{MR0270697}). Moreover, we will assume throughout the paper that $S$ has genus $\tau\geq 2$. \\
\\
\indent We denote with $\mathcal{GH}(S)$ the deformation space of GHMC anti-de Sitter structures on $S\times \R$, i.e. the space of maximal globally hyperbolic anti-de Sitter metrics on $S\times \R$ up to diffeomorphisms isotopic to the identity.
\begin{teo}[\cite{Mess}] $\mathcal{GH}(S)$ is parameterised by $\T(S) \times \T(S)$.
\end{teo}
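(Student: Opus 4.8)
The plan is to describe a GHMC anti-de Sitter structure on $S\times\R$ by its holonomy $\rho\colon\pi_{1}(S)\to\Isom_{0}(AdS_{3})=\dPSL$, written $\rho=(\rho_{l},\rho_{r})$, and to show that (up to conjugation) $\rho\longmapsto(\rho_{l},\rho_{r})$ induces a bijection $\mathcal{GH}(S)\to\T(S)\times\T(S)$. The object mediating everything is the \emph{limit curve}. First I would recall that, by global hyperbolicity together with maximality, the developing map of such a structure is a diffeomorphism onto a \emph{domain of dependence} $D\subset AdS_{3}$, and that the frontier of $D$ inside $\partial_{\infty}AdS_{3}=\R\Pp^{1}\times\R\Pp^{1}$ is a $\rho$-invariant acausal topological circle $\Lambda$. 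An acausal circle in $\R\Pp^{1}\times\R\Pp^{1}$ is precisely the graph of an orientation-preserving homeomorphism of $\R\Pp^{1}$, so we may write $\Lambda=\mathrm{graph}(\phi)$; the invariance then reads $\rho_{r}(\gamma)\circ\phi=\phi\circ\rho_{l}(\gamma)$ for all $\gamma$, and in particular $\rho_{l}$ and $\rho_{r}$ are topologically conjugate.

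\emph{Forward direction.} It remains to see that $\rho_{l}$, and hence by the conjugacy also $\rho_{r}$, is discrete and faithful. I would argue via the Euler number: a topological analysis of the support planes of the convex hull of $\Lambda$ --- whose two boundary components are spacelike, hence intrinsically hyperbolic because $AdS_{3}$ has curvature $-1$ --- yields $e(\rho_{l})=\pm\chi(S)$, the extremal value. By Goldman's theorem, a representation of a closed surface group into $\PSL(2,\R)$ of maximal Euler number is Fuchsian; therefore $\rho_{l},\rho_{r}\in\T(S)$, and $\rho\mapsto(\rho_{l},\rho_{r})$ defines a map $\mathcal{GH}(S)\to\T(S)\times\T(S)$.

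\emph{Reverse direction.} Conversely, start from a pair of Fuchsian representations $(\rho_{l},\rho_{r})$. Both act on $\R\Pp^{1}$ as cocompact Fuchsian groups uniformising closed surfaces of genus $\tau$; a marking-preserving homeomorphism between these two hyperbolic surfaces lifts to an orientation-preserving homeomorphism $\phi$ of $\R\Pp^{1}$ intertwining the two boundary actions. Put $\Lambda=\mathrm{graph}(\phi)$, an acausal $\rho$-invariant circle, and let $D(\Lambda)\subset AdS_{3}$ be its domain of dependence, realised as the intersection of the half-spaces bounded by the spacelike planes dual to the points of $\Lambda$. One then checks that $\rho$ acts freely and properly discontinuously on $D(\Lambda)$, that the quotient is diffeomorphic to $S\times\R$, globally hyperbolic with Cauchy surface $S$, and maximal among such structures; this provides the inverse map.

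\emph{Bijectivity and the main difficulty.} Injectivity is formal once the construction is in place: a GHMC structure is maximal, hence isometric to $D(\Lambda)/\rho$ for its own limit curve $\Lambda$; but a $\rho$-invariant acausal circle is unique, because the intertwining $\phi$ is forced to carry the attracting fixed point of $\rho_{l}(\gamma)$ to that of $\rho_{r}(\gamma)$ for every $\gamma$, and these fixed points are dense in $\R\Pp^{1}$. Hence the structure is determined by $\rho$, and the two maps above are mutually inverse; if one wants the identification $\mathcal{GH}(S)\cong\T(S)\times\T(S)$ as a homeomorphism, it remains only to note that all constructions depend continuously on the data. The step I expect to be hardest is the analytic backbone shared by both directions: that a globally hyperbolic AdS structure is a domain-of-dependence structure with acausal limit set, and that the $\rho$-action on $D(\Lambda)$ is properly discontinuous with compact Cauchy quotient. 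Both rely on a delicate control of the causal structure of $AdS_{3}$ near $\Lambda$ and of the geometry of its convex hull, using acausality to locate the support planes.
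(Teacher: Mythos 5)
Your outline follows exactly the route the paper (and Mess's original argument) takes: project the holonomy onto the two factors, use the maximal-Euler-number/Goldman argument to see both factors are Fuchsian, and invert by taking the graph of the unique conjugating boundary homeomorphism as limit curve and forming the domain of dependence. The paper itself only sketches this construction and defers the hard analytic steps (acausality of the limit set, proper discontinuity on the domain of dependence) to \cite{Mess}, which is also where your proposal correctly locates the main difficulty.
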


The diffeomorphism is constructed as follows. Given a GHMC anti-de Sitter structure, its holonomy representation $\rho:\pi_{1}(S) \rightarrow \Isom(AdS_{3})$ induces a pair of representations $(\rho_{l}, \rho_{r})$ by projecting onto each factor. Mess proved that both are faithful and decrete and thus define two points in $\T(S)$. On the other hand, given a pair of Fuchsian representations $(\rho_{l}, \rho_{r})$, there exists a unique homeomorphism $\phi: \R\Pp^{1} \rightarrow \R\Pp^{1}$ such that $\rho_{r}(\gamma)\circ \phi=\phi \circ \rho_{l}(\gamma)$ for every $\gamma \in \pi_{1}(S)$. The graph of $\phi$ defines a curve $\Lambda_{\rho}$ on the boundary at infinity of $AdS_{3}$ and Mess contructed a maximal domain of discontinuity $\mathcal{D}(\phi)$ for the action of $\rho(\pi_{1}(S)):=(\rho_{l}, \rho_{r})(\pi_{1}(S))$, called domain of dependence, by considering the set of points whose dual space-like plane is disjoint from $\Lambda_{\rho}$. The quotient
\[
	M=\mathcal{D}(\phi)/\rho(\pi_{1}(S))
\]
is the desired GHMC anti-de Sitter manifold. \\
\\
\indent Manifolds corresponding to the diagonal in $\dT$ are called Fuchsian. In this case, the homeomorphism $\phi$ is the identity and the corresponding curve on the boundary at infinity of $AdS_{3}$ is the boundary of the totally geodesic space-like plane
\[
	P_{0}=\{ \alpha \in \PSL(2, \R) \ | \ \trace(\alpha)=0\} \ .
\]
The representation $\rho=(\rho_{0}, \rho_{0})$ preserves $P_{0}$ and, thus a foliation by equidistant surfaces from $P_{0}$. The quotient
\[
	M_{F}=\mathcal{D}(\Id)/\rho(\pi_{1}(S))
\]
is thus isometric to 
\[
	\left( S\times \left(-\frac{\pi}{2}, \frac{\pi}{2}\right), g_{F}=\cos^{2}(t)h_{0}-dt^{2} \right) \ ,
\]
where $h_{0}$ is the hyperbolic metric with holonomy $\rho_{0}$. \\
\\
\indent Mess introduced also the notion of convex core. This is the smallest convex subset of a GHMC anti-de Sitter manifold $M$ which is homotopically equivalent to $M$. It can be concretely realised as follows. If $\rho$ denotes the holonomy representation of $M$ and $\Lambda_{\rho}\subset \partial_{\infty}AdS_{3}$ is the limit set of the action of $\rho(\pi_{1}(S))$, the convex core of $M$ is
\[
	\mathcal{C}(M)=\mathcal{C}(\Lambda_{\rho})/\rho(\pi_{1}(S)) \ ,
\]
where $\mathcal{C}(\Lambda_{\rho})$ denotes the convex-hull of the curve $\Lambda_{\rho}$ in $AdS_{3}$. \\
If $M$ is Fuchsian, the convex core is a totally geodesic surface. Otherwise, it is a three-dimensional domain, homeomorphic to $S \times [0,1]$, the two boundary components being space-like surfaces, endowed with hyperbolic metrics and pleated along measured laminations.  

\subsection{A parameterisation using maximal surfaces}\label{subsec:para_max}
In this paper we use another parameterisation of the deformation space of GHMC anti-de Sitter structures on $S \times \R$, introduced by Krasnov and Schlenker (\cite{Schlenker-Krasnov}). We recall here the main steps of their construction.\\
\\
\indent Let $M$ be a GHMC anti-de Sitter $3$-manifold. It is well-known (\cite{foliationCMC}) that $M$ contains a unique embedded maximal surface $\Sigma$, i.e. with vanishing mean curvature. By the Fundamental Theorem of surfaces embedded in anti-de Sitter space, $\Sigma$ is uniquely determined by its induced metric $I$ and its shape operator $B:T\Sigma \rightarrow T\Sigma$, which are related to each other by the Gauss-Codazzi equations:
\begin{align*}
	&d^{\nabla_{I}}B=0 \\	
	&K_{I}=-1-\det(B) \ ,
\end{align*}
where we have denoted with $K_{I}$ the curvature of the metric $I$. The first equation, together with the fact that $B$ is traceless, implies that the second fundamental form $II=I(B\cdot, \cdot)$ is the real part of a quadratic differential $q$ (\cite{Hopf}), which is holomorphic for the complex structure compatible with the metric, in the following sense. For every pair of vector fields $X$ and $Y$ on $\Sigma$, we have
\[
	\Re(q)(X,Y)=I(BX, Y) \ .
\]
In a local conformal coordinate $z$, we can write $q=f(z)dz^{2}$ and $I=e^{2u}|dz|^{2}$. Thus, $\Re(q)$ is the bilinear form that in the frame $\{\partial_{x}, \partial_{y}\}$ is represented by
\[
	\Re(q)=\begin{pmatrix}
		\Re(f) & -\Imm(f)\\
		-\Imm(f) & -\Re(f)
		\end{pmatrix} \ ,
\]
and the shape operator $B$ can be recovered as $B=I^{-1}\Re(q)$. Moreover, a straightforward computation shows that the Codazzi equation for $B$ is equivalent to the Cauchy-Riemann equations for $f$. Therefore, we can define a map
\begin{align*}
	\Psi:\mathcal{GH}(S) &\rightarrow T^{*}\T(S)\\
		M &\mapsto (h,q)
\end{align*}
associating to a GHMC anti-de Sitter structure the unique hyperbolic metric in the conformal class of $I$ and the quadratic differential $q$, constructed from the embedding data of the maximal surface $\Sigma$ embedded in $M$. \\
\\
\indent In order to prove that $\Psi$ is a homeomorphism, Krasnov and Schlenker (\cite{Schlenker-Krasnov}) found an explicit inverse. They showed that, given a hyperbolic metric $h$ and a quadratic differential $q$ that is holomorphic for the complex structure compatible with $h$, it is always possible to find a smooth map $u: S \rightarrow \R$ such that $I=e^{2u}h$ and $B=I^{-1}\Re(q)$ are the induced metric and the shape operator of a maximal surface embedded in a GHMC anti-de Sitter manifold. This is accomplished by noticing that the Codazzi equation for $B$ is trivially satisfied since $q$ is holomorphic, and thus it is sufficient to find $u$ so that the Gauss equation holds. Now, 
\[
	\det(B)=\det(e^{-2u}h^{-1}\Re(q))=e^{-4u}\det(h^{-1}\Re(q))=-e^{-4u}\|q\|^{2}_{h}
\]
and
\[
	K_{I}=e^{-2u}(K_{h}-\Delta_{h}u) ,
\]
hence the Gauss equation translates into the quasi-linear PDE
\begin{equation}\label{eq:PDE}
	\Delta_{h}u=e^{2u}-e^{-2u}\|q\|_{h}^{2}+K_{h} \ .
\end{equation}

\begin{prop}[Lemma 3.6 \cite{Schlenker-Krasnov}]\label{prop:existence-sol} There exists a unique smooth solution $u:S \rightarrow \R$ to Equation (\ref{eq:PDE}). Moreover, for any non-trivial holomorphic quadratic differential $q_{1}$, along the ray $q=tq_{1}$, the solution $u_{t}$ depends smoothly on $t\in \R$.
\end{prop}
\begin{proof} Existence and uniqueness were proved in \cite{Schlenker-Krasnov}. Let us now prove the smooth dependence of the solution on $t$. Consider the mapping $F:C^{2,\alpha}(S)\times \R \rightarrow C^{0,\alpha}(S)$ defined by 
\[
 F(w, t)=\Delta_{h}w-e^{2w}+e^{-2w}\|tq_{1}\|_{h}^{2}-K_{h}  \ .
\]
Notice that the solution $u_{t}$ to Equation (\ref{eq:PDE}) along the ray $q=tq_{1}$ satisfies $F(u_{t},t)=0$. At any $t_{0} \in \R$, the linearization  
\begin{align*}
  dF_{w}&(u_{t_{0}},t_{0}): C^{2,\alpha}(S) \rightarrow C^{0,\alpha}(S) \\
      &\psi \mapsto \Delta_{h}\psi-(2e^{2u_{t_{0}}}+2e^{-2u_{t_{0}}}\|t_{0}q_{1}\|_{h}^{2})\psi
\end{align*}
is a negative definite operator, hence invertible. By the Implicit Function Theorem for Banach spaces, the solution $u_{t}$ to Equation (\ref{eq:PDE}) depends smoothly on $t$ in a neighbourhood of $t_{0}$.
\end{proof}

In Section \ref{sec:entropy}, we will give precise estimates for the solution $u$ in terms of the quadratic differential $q$, and study its asymptotics along a ray $q=tq_{1}$ for a fixed non-trivial holomorphic quadratic differential $q_{1}$. 

\subsection{Relation between the two parameterisations} The theory of harmonic maps between hyperbolic surfaces provides a bridge between the two parameterisations of $\mathcal{GH}(S)$. 

\begin{defi}A diffeomorphism $m:(S,h_{l}) \rightarrow (S,h_{r})$ is minimal Lagrangian if it is area-preserving and its graph is a minimal surface in $(S\times S, h\oplus h')$.
\end{defi}

Minimal Lagrangian maps between hyperbolic surfaces have been extensively studied (\cite{oneharmonic}, \cite{seppimaximal}, \cite{Tambu_polygons}, \cite{Tambu_regularAdS}). We will use, in particular, the following fundamental result:

\begin{teo}[\cite{labourieCP}, \cite{Schoenharmonic}] Given two hyperbolic metrics $h_{l}$ and $h_{r}$ on a closed surface $S$, there exists a unique minimal Lagrangian map $m:(S, h_{l}) \rightarrow (S,h_{r})$ isotopic to the identity.
\end{teo}

This should remind the reader of the more classical existence and uniqueness of harmonic maps between hyperbolic surfaces. Recall that a map $f:(N,g) \rightarrow (N',g')$ between Riemannian manifolds is harmonic if it is a critical point of the Dirichlet energy functional
\[
		E(f)=\int_N \| df\|^{2} dV_{g} \ . 
\]
If $(N',g')$ is non-positively curved, harmonic maps are actually local minimizers and are unique in a fixed homotopy class. Moreover, in the special case of surface domains, the energy functional is conformally invariant, thus harmonicity depends only on the conformal class of the Riemannian metric on the domain. \\
\indent If we specialize to harmonic maps between surfaces $f:(S,h) \rightarrow (S,h')$ and we fix a complex structure on $S$ compatible with the metric $h$, we can define the Hopf differential of $f$ as $\mathrm{Hopf}(f)=(f^{*}h')^{(2,0)}$. The map $f$ being harmonic implies that $\mathrm{Hopf}(f)$ is a holomorphic quadratic differential on $S$ that measures how far $f$ is from being conformal. \\

\indent Minimal Lagrangian diffeomorphisms are closely related to harmonic maps in the following sense: a minimal Lagrangian diffeomorphism $m:(S,h_{l}) \rightarrow (S,h_{r})$ can be factorised (\cite{bon_schl}) as $m=f'\circ f^{-1}$, where
\[
	f:(S,h) \rightarrow (S,h_{l}) \ \ \ \ \text{and} \ \ \ \ f':(S,h) \rightarrow (S,h_{r})
\]
are harmonic with opposite Hopf differentials. We call $h$ the center of the minimal Lagrangian map.

\begin{prop}[\cite{bon_schl}]\label{prop:rel_para} Let $h_{r}$ and $h_{l}$ be hyperbolic metrics on $S$ with holonomy $\rho_{r}$ and $\rho_{l}$. The center of the minimal Lagrangian map $m:(S, h_{l}) \rightarrow (S, h_{r})$ is the conformal class of the induced metric on the maximal surface $\Sigma$ contained in the GHMC anti-de Sitter manifold $M$ with holonomy $\rho=(\rho_{l}, \rho_{r})$. Moreover, the second fundamental form of $\Sigma$ is (up to a factor $\pm i$) the real part of the Hopf differential of the harmonic map factorising $m$.
\end{prop}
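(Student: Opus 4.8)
The plan is to realise the two harmonic maps factorising $m$ as the left and right Gauss maps of the maximal surface $\Sigma$, and then to conclude via uniqueness of the minimal Lagrangian map isotopic to the identity. First I would set up these Gauss maps. Write $M=\mathcal{D}(\phi)/\rho(\pi_{1}(S))$ with $\rho=(\rho_{l},\rho_{r})$, and lift $\Sigma$ to a $\rho$-equivariant spacelike surface $\tilde\Sigma\subset AdS_{3}=\PSL(2,\R)$, with induced metric $I$ and shape operator $B$. Transporting the future unit normal of $\tilde\Sigma$ to the Lie algebra $\mathfrak{sl}(2,\R)\cong\R^{2,1}$ by left, resp.\ right, translations and landing in the hyperboloid $\h^{2}$ of future unit timelike vectors produces the left and right Gauss maps $G_{l},G_{r}\colon\tilde\Sigma\to\h^{2}$. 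They are equivariant, $G_{l}\circ\gamma=\rho_{l}(\gamma)\circ G_{l}$ and $G_{r}\circ\gamma=\rho_{r}(\gamma)\circ G_{r}$, hence descend to maps $\bar{G}_{l}\colon\Sigma\to(S,h_{l})$ and $\bar{G}_{r}\colon\Sigma\to(S,h_{r})$.

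The heart of the proof is a computation in the embedding data of $\tilde\Sigma$ (as in \cite{Schlenker-Krasnov}) identifying the pulled-back hyperbolic metrics. With $E=\Id$, and using that $\Sigma$ is maximal so that $B$ is $I$-self-adjoint with $\trace(B)=0$ and $B^{2}=-\det(B)E$, they take the form
\[
	G_{l}^{*}h_{l}=(1-\det B)\,I+2\,II,\qquad G_{r}^{*}h_{r}=(1-\det B)\,I-2\,II,
\]
equivalently $G_{l}^{*}h_{l}=I((E+B)\cdot,(E+B)\cdot)$ and $G_{r}^{*}h_{r}=I((E-B)\cdot,(E-B)\cdot)$, where $II=I(B\cdot,\cdot)=\Re(q)$. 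Since the principal curvatures of the maximal surface lie in $(-1,1)$ we have $\det(E\pm B)=1+\det B>0$, so $G_{l}$ and $G_{r}$ are orientation-preserving local diffeomorphisms; being equivariant maps between closed surfaces of genus $\tau$, the descended maps $\bar{G}_{l},\bar{G}_{r}$ are then diffeomorphisms, and they induce the identity on $\pi_{1}(S)$, hence are isotopic to the identity. Reading the displayed formulas in a conformal coordinate for the complex structure of $I$, the conformal parts of $G_{l}^{*}h_{l}$ and $G_{r}^{*}h_{r}$ coincide while their trace-free parts are opposite, so the Hopf differentials of $\bar{G}_{l}$ and $\bar{G}_{r}$ are $q$ and $-q$; these are holomorphic precisely because $d^{\nabla_{I}}B=0$. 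An orientation-preserving diffeomorphism between surfaces is harmonic if and only if its Hopf differential is holomorphic (this is also the Ruh--Vilms statement for the mean-curvature-zero surface $\Sigma$, cf.\ \cite{Schlenker-Krasnov,bon_schl}), so $\bar{G}_{l}\colon(S,[I])\to(S,h_{l})$ and $\bar{G}_{r}\colon(S,[I])\to(S,h_{r})$ are harmonic.

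It then remains to assemble the pieces. The maps $\bar{G}_{l},\bar{G}_{r}$ are harmonic diffeomorphisms isotopic to the identity, from the common domain $(S,[I])$, with opposite Hopf differentials $q$ and $-q$. By the characterisation of minimal Lagrangian maps through such factorisations (\cite{bon_schl}), $\bar{G}_{r}\circ\bar{G}_{l}^{-1}\colon(S,h_{l})\to(S,h_{r})$ is minimal Lagrangian and isotopic to the identity, hence equals $m$ by the uniqueness of the minimal Lagrangian map (\cite{labourieCP},\cite{Schoenharmonic}). Therefore the centre of $m$ is the conformal class $[I]$ of the induced metric on $\Sigma$, and the factorising harmonic map $\bar{G}_{l}$ has Hopf differential $q$, whose real part $\Re(q)=II$ is the second fundamental form of $\Sigma$, as claimed.

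The step I expect to be the most delicate is the computation of $G_{l}^{*}h_{l}$ and $G_{r}^{*}h_{r}$ in terms of $(I,B)$ together with the harmonicity of the Gauss maps: this is where both Gauss--Codazzi equations for the maximal surface are genuinely used, and where the normalisations relating $q$, the second fundamental form and the Hopf differential must be tracked carefully. The remaining steps reduce to the uniqueness of the minimal Lagrangian map isotopic to the identity and to routine bookkeeping with markings and equivariance.
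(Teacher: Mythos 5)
The paper itself gives no proof of this proposition---it is quoted from \cite{bon_schl}---so the comparison is with the argument of that reference, and your proposal reconstructs exactly that argument: realise the two factorising harmonic maps as the left and right Gauss maps of the maximal surface, compute the pulled-back metrics from the embedding data, check harmonicity and that the Hopf differentials are opposite, and conclude by uniqueness of the minimal Lagrangian map isotopic to the identity. The structural steps you treat as routine (descent and equivariance, isotopy to the identity, ``orientation-preserving diffeomorphism with holomorphic Hopf differential is harmonic'') are all fine.

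The step to recheck is precisely the one you flag as delicate: the pull-back formula. In the standard computation (Krasnov--Schlenker, Bonsante--Schlenker) the left and right hyperbolic metrics are
\[
G_{l}^{*}h_{l}=I\bigl((E+JB)\cdot,(E+JB)\cdot\bigr),\qquad G_{r}^{*}h_{r}=I\bigl((E-JB)\cdot,(E-JB)\cdot\bigr),
\]
with $J$ the complex structure of $I$ inserted, rather than $E\pm B$ as you wrote. The discrepancy is invisible to the curvature sanity check: for $B$ trace-free and $I$-self-adjoint one has $\det(E+B)=\det(E+JB)=1+\det B$, and both $B$ and $JB$ are Codazzi, so both candidates are hyperbolic metrics of the correct curvature; but only one of them is the actual pull-back under the Gauss maps, and the two differ in their trace-free parts, since $I(B\cdot,\cdot)=\Re(q)$ while $I(JB\cdot,\cdot)=\Re(\pm iq)$. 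Concretely, your version makes the Hopf differentials of the factorising harmonic maps proportional to $\pm q$, while the $JB$ version makes them proportional to $\mp iq$. None of the uses of the proposition in this paper are sensitive to this---the centre is $[I]$ either way, the Hopf differentials are opposite with the same zeros and norm as $q$, and everything downstream only involves $\|q\|_{h}$, which is invariant under $q\mapsto e^{i\theta}q$---but the literal second sentence of the statement (the second fundamental form equals the real part of the Hopf differential) hinges exactly on this phase. So you cannot simply assert the displayed formulas as the output of ``a computation in the embedding data'': you must either carry out the Gauss-map computation in a fixed convention or cite it precisely, and then reconcile the resulting phase with the normalisation $II=\Re(q)$ used in the Krasnov--Schlenker parameterisation.
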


This picture has been recently generalised to hyperbolic surfaces with cone singularities (\cite{toulisse}) and to other families of diffeomorphisms between hyperbolic surfaces, called landslides (\cite{bsads}, \cite{Tambu_Qiyu}).

\section{H\"older exponent}\label{sec:holder}
In this section we introduce the H\"older exponent of a GHMC anti-de Sitter manifold and study its asymptotic behaviour along a ray of quadratic differentials.\\
\\
\indent Let $M$ be a GHMC anti-de Sitter manifold. Its holonomy representation $\rho:\pi_{1}(S) \rightarrow \dPSL$ gives rise, by projecting into each factor, to two discrete and faithful representations $\rho_{l}$ and $\rho_{r}$.
Let $\phi:\R\Pp^{1} \rightarrow \R\Pp^{1}$ be the unique homeomorphism such that
\[
	\rho_{r}(\gamma)\circ \phi=\phi \circ \rho_{l}(\gamma) \ \ \ \ \ \text{for every} \ \gamma \in \pi_{1}(S)  \ .
\]
It is well-known (\cite{Thurstondistance}) that $\phi$ is quasi-symmetric, and, in particular, has H\"older regularity.

\begin{defi}\label{defi:Hol}Given a H\"older map $f:\R\Pp^{1} \rightarrow \R\Pp^{1}$, we define the H\"older exponent of $f$ as
\[
  \alpha(f)=\sup\{ \alpha \in (0,1] \ | \ \exists C>0 \ d_{\R\Pp^{1}}(x,y)\leq Cd_{\R\Pp^{1}}(x,y)^{\alpha} \ \ \forall  x,y\in \R\Pp^{1} \} \ .
\]
The H\"older exponent $\alpha(M)$ of $M$ is the minimum between the H\"older exponents of $\phi$ and $\phi^{-1}$.
\end{defi}

\begin{oss}This definition takes into account that $\phi$ and $\phi^{-1}$ have in general different H\"older exponents. On the other hand, the manifolds with holonomies $(\rho_{l}, \rho_{r})$ and $(\rho_{r}, \rho_{l})$ are isometric, because the map
\begin{align*}
	\PSL(2,\R) &\rightarrow \PSL(2,\R)\\
	A &\mapsto A^{-1}
\end{align*}
induces an orientation-reversing isometry of $AdS_{3}$ which swaps the left and right holonomies in Mess' parameterisation. Hence, we expect a geometric interesting quantity to be invariant under this transformation.
\end{oss}

An explicit formula for the H\"older exponent of $\phi$ is well-known:
\begin{teo}[Chapter 7 Proposition 14 \cite{Hyp90}, Theorem 6.5 \cite{BS11}]\label{teo:formulavecchia}Let $\rho_{r}$ and $\rho_{l}$ be Fuchsian representations. The H\"older exponent of the unique homeomorphism $\phi:\R\Pp^{1}\rightarrow \R\Pp^{1}$ such that
\[
	\rho_{r}(\gamma)\circ \phi=\phi\circ \rho_{l}(\gamma) \ \ \ \ \ \text{for every} \ \ \gamma \in \pi_{1}(S)
\]
is
\[
	\alpha(\phi)=\inf_{\gamma \in \pi_{1}(S)}\frac{\ell_{r}(\gamma)}{\ell_{l}(\gamma)}
\]
where $\ell_{r}(\gamma)$ and $\ell_{l}(\gamma)$ denote the lengths of the geodesic representatives of $\gamma$ with respect to the hyperbolic metrics with holonomy $\rho_{r}$ and $\rho_{l}$, respectively.
\end{teo}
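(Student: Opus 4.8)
The plan is to measure the H\"older exponent of $\phi$ with respect to the visual (round) metric $d$ on $\R\Pp^{1}=\partial_{\infty}\h^{2}$, for which $d(\xi,\eta)\asymp e^{-(\xi\vert\eta)_{o}}$, where $(\xi\vert\eta)_{o}$ is the Gromov product of the two endpoints seen from a fixed basepoint $o\in\h^{2}$, i.e. (up to additive error) the distance from $o$ to the complete geodesic with endpoints $\xi,\eta$. In these terms $\phi$ is $\alpha$-H\"older precisely when $(\phi\xi\vert\phi\eta)_{o}\geq\alpha\,(\xi\vert\eta)_{o}-C$ for a uniform constant $C$, so the whole statement reduces to comparing Gromov products under $\phi$. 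I would establish the inequalities $\alpha(\phi)\leq\inf_{\gamma}\ell_{r}(\gamma)/\ell_{l}(\gamma)$ and $\alpha(\phi)\geq\inf_{\gamma}\ell_{r}(\gamma)/\ell_{l}(\gamma)$ separately; the first is an elementary local computation, while the second is the substantial part.

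For the upper bound I would probe $\phi$ near the fixed points of hyperbolic elements. Fix $\gamma\in\pi_{1}(S)$ and let $p$ be the attracting fixed point of $\rho_{l}(\gamma)$, so that $\phi(p)$ is the attracting fixed point of $\rho_{r}(\gamma)$. Conjugating both actions to coordinates centred at these fixed points, $\rho_{l}(\gamma)$ acts as $s\mapsto e^{-\ell_{l}(\gamma)}s$ and $\rho_{r}(\gamma)$ as $t\mapsto e^{-\ell_{r}(\gamma)}t$, since the multiplier of a hyperbolic M\"obius transformation at its attracting fixed point is the exponential of minus its translation length. The relation $\phi\circ\rho_{l}(\gamma)=\rho_{r}(\gamma)\circ\phi$ then forces $\phi(e^{-\ell_{l}(\gamma)}s)=e^{-\ell_{r}(\gamma)}\phi(s)$; iterating on a point $s_{0}$ and its images $s_{n}=e^{-n\ell_{l}(\gamma)}s_{0}\to p$ gives $\phi(s_{n})\asymp s_{n}^{\ell_{r}(\gamma)/\ell_{l}(\gamma)}$, so no H\"older exponent larger than $\ell_{r}(\gamma)/\ell_{l}(\gamma)$ can hold at $p$. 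Taking the infimum over $\gamma$ yields $\alpha(\phi)\leq\inf_{\gamma}\ell_{r}(\gamma)/\ell_{l}(\gamma)$.

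For the lower bound I would exploit equivariance together with the cocompactness of both actions. Given $\xi,\eta$ with large Gromov product $D=(\xi\vert\eta)_{o}$, the geodesic $(\xi\eta)$ passes at distance $\approx D$ from $o$; since the orbit $\rho_{l}(\pi_{1}(S))\cdot o$ is a net in $\h^{2}$, I can choose $g\in\pi_{1}(S)$ with $\rho_{l}(g)\cdot o$ within bounded distance of the foot of the perpendicular from $o$ to $(\xi\eta)$, so that $\rho_{l}(g^{-1})$ carries $(\xi\eta)$ to a geodesic passing within a uniform distance of $o$ and $d(o,\rho_{l}(g)\cdot o)=D+O(1)$. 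By equivariance $\phi(\rho_{l}(g^{-1})\xi)=\rho_{r}(g^{-1})\phi(\xi)$ and likewise for $\eta$, and this pair has uniformly bounded Gromov product (here one uses only that $\phi$ is a fixed homeomorphism of the compact circle, hence uniformly bi-continuous); applying the isometry $\rho_{r}(g)$ then shows $(\phi\xi\vert\phi\eta)_{o}=d(o,\rho_{r}(g)\cdot o)+O(1)$. Thus the comparison of Gromov products reduces to a comparison of orbit displacements, and one obtains
\[
	\alpha(\phi)=\liminf_{g\in\pi_{1}(S)}\frac{d(o,\rho_{r}(g)\cdot o)}{d(o,\rho_{l}(g)\cdot o)}\, .
\]

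The main obstacle is then to identify this liminf over orbit displacements with $\inf_{\gamma}\ell_{r}(\gamma)/\ell_{l}(\gamma)$. Testing $g=\gamma^{n}$ gives $d(o,\rho(\gamma^{n})\cdot o)=n\,\ell(\gamma)+O(1)$ and hence $\liminf\leq\inf_{\gamma}\ell_{r}/\ell_{l}$ for free; the delicate point is the reverse inequality, whose difficulty is that a single displacement $d(o,\rho_{l}(g)\cdot o)$ may far exceed the translation length $\ell_{l}(g)$, so one cannot argue element by element. I would handle this with a closing-lemma argument: the geodesic segment $[o,\rho_{l}(g)\cdot o]$ in $X_{l}=\h^{2}/\rho_{l}$ can be closed up to a nearby closed geodesic whose $\rho_{l}$- and $\rho_{r}$-lengths approximate $d(o,\rho_{l}(g)\cdot o)$ and $d(o,\rho_{r}(g)\cdot o)$ with controlled error, together with the subadditivity (Fekete) of these length functions under concatenation. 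This exhibits any orbit-displacement ratio as, up to $o(1)$, an average of ratios $\ell_{r}(\gamma)/\ell_{l}(\gamma)$ along closed geodesics, hence bounded below by their infimum, giving $\alpha(\phi)\geq\inf_{\gamma}\ell_{r}(\gamma)/\ell_{l}(\gamma)$ and completing the proof.
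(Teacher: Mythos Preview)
The paper does not prove this theorem at all: it is quoted as a known result with references to \cite{Hyp90} and \cite{BS11}, and no proof environment follows the statement. So there is nothing in the paper to compare your argument against.

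That said, your outline is a correct proof strategy. The upper bound via the multiplier at an attracting fixed point is the standard argument and is fine as written. The reduction of the lower bound to
\[
\alpha(\phi)=\liminf_{g}\frac{d(o,\rho_{r}(g)\cdot o)}{d(o,\rho_{l}(g)\cdot o)}
\]
via Gromov products, cocompactness on the $\rho_{l}$-side, and uniform continuity of $\phi$ is also correct.

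The only place where you are vaguer than you need to be is the final step. You invoke a closing lemma together with ``Fekete subadditivity'' and speak of exhibiting an orbit-displacement ratio as an average of ratios $\ell_{r}/\ell_{l}$; this is more machinery than the situation requires and the phrasing obscures why the $\rho_{r}$-side is controlled. The clean version is: given $g$ with $d(o,\rho_{l}(g)o)$ large, close up in $X_{l}$ to get an element $\gamma=gh$ with $h$ drawn from a fixed finite set (so $d(o,\rho_{l}(h)o)$ and $d(o,\rho_{r}(h)o)$ are uniformly bounded) and with $\ell_{l}(\gamma)=d(o,\rho_{l}(g)o)+O(1)$. Then, using only that translation length never exceeds displacement,
\[
d(o,\rho_{r}(g)o)=d(o,\rho_{r}(\gamma)o)+O(1)\ \geq\ \ell_{r}(\gamma)+O(1)\ \geq\ \Big(\inf_{\delta}\tfrac{\ell_{r}(\delta)}{\ell_{l}(\delta)}\Big)\,\ell_{l}(\gamma)+O(1),
\]
which gives the desired lower bound on the ratio after dividing by $d(o,\rho_{l}(g)o)$. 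No subadditivity or averaging is needed; the asymmetry that makes this work is that $\ell\leq d(o,\cdot\, o)$ is a one-sided inequality valid for every element, so closing on the $\rho_{l}$-side alone suffices.
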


\indent Therefore, the H\"older exponent of a GHMC anti-de Sitter manifold with holonomy $\rho=(\rho_{l}, \rho_{r})$ is given by
\begin{equation}\label{eq:formula_HolM}
	\alpha(M)=\inf_{\gamma \in \pi_{1}(S)}\min\left\{\frac{\ell_{r}(\gamma)}{\ell_{l}(\gamma)}, \frac{\ell_{l}(\gamma)}{\ell_{r}(\gamma)}\right\} \ .
\end{equation}

\begin{oss}\label{oss}Since the formula for $\alpha(M)$ is homogeneous and weighted simple closed curves are dense in the space of measured foliations, the above formula is equivalent to
\[
	\alpha(M)=\inf_{\mu \in \mathcal{MF}(S)}\min\left\{\frac{\ell_{r}(\mu)}{\ell_{l}(\mu)}, \frac{\ell_{l}(\mu)}{\ell_{r}(\mu)}\right\} \ .
\]
\end{oss}

\indent We easily deduce a rigity property of the H\"older exponent:

\begin{prop}[Mostow]The H\"older exponent of a GHMC anti-de Sitter manifold is equal to $1$ if and only if $M$ is Fuchsian.
\end{prop}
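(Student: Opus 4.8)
The plan is to use the explicit formula \eqref{eq:formula_HolM} for $\alpha(M)$ together with the classical characterisation of hyperbolic structures via marked length spectra. The ``if'' direction is immediate: if $M$ is Fuchsian, then $\rho_l$ and $\rho_r$ are conjugate (indeed equal, up to conjugation) in $\PSL(2,\R)$, so $\ell_l(\gamma)=\ell_r(\gamma)$ for every $\gamma\in\pi_1(S)$, and hence every term $\min\{\ell_r(\gamma)/\ell_l(\gamma),\,\ell_l(\gamma)/\ell_r(\gamma)\}$ equals $1$; taking the infimum gives $\alpha(M)=1$.

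For the ``only if'' direction, suppose $\alpha(M)=1$. By \eqref{eq:formula_HolM}, for every $\gamma\in\pi_1(S)$ we have $\min\{\ell_r(\gamma)/\ell_l(\gamma),\,\ell_l(\gamma)/\ell_r(\gamma)\}\geq 1$, and since each of these ratios is at most the reciprocal of the other, this forces $\ell_l(\gamma)=\ell_r(\gamma)$ for all $\gamma$. Thus the hyperbolic metrics $h_l$ and $h_r$ have the same marked length spectrum. By the classical rigidity theorem for hyperbolic surfaces (the marked length spectrum determines the hyperbolic structure — a theorem of Fricke, see also Otal), we conclude that $h_l$ and $h_r$ define the same point in $\T(S)$, i.e.\ $\rho_l$ and $\rho_r$ are conjugate in $\PSL(2,\R)$. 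Hence $(\rho_l,\rho_r)$ lies on the diagonal of $\dT$, which by Mess' parameterisation means exactly that $M$ is Fuchsian.

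I do not expect any serious obstacle here; the only point requiring a little care is the reduction from the infimum condition to the pointwise equality $\ell_l(\gamma)=\ell_r(\gamma)$, which follows simply because for positive reals $a,b$ one has $\min\{a/b,b/a\}\le 1$ with equality iff $a=b$, so $\alpha(M)=1$ is equivalent to $\ell_l=\ell_r$ on all of $\pi_1(S)$. Alternatively, one could phrase the whole argument via Definition~\ref{defi:Hol}: $\alpha(M)=1$ means both $\phi$ and $\phi^{-1}$ are Lipschitz, hence $\phi$ is bi-Lipschitz; but a bi-Lipschitz conjugacy between two Fuchsian actions on $\R\Pp^1$ is in fact Möbius (one can see this from Theorem~\ref{teo:formulavecchia}, since $\alpha(\phi)=\alpha(\phi^{-1})=1$ forces $\ell_r(\gamma)=\ell_l(\gamma)$ for all $\gamma$), which again yields that $\rho_l$ and $\rho_r$ are conjugate and $M$ is Fuchsian.
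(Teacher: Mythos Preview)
Your proof is correct and follows essentially the same route as the paper. The only cosmetic difference is in the ``only if'' direction: the paper argues by contrapositive, invoking Thurston's result (from \cite{Thurstondistance}) that if $h_l\neq h_r$ then some curve has $\ell_l(\gamma)>\ell_r(\gamma)$, whereas you argue directly via marked length spectrum rigidity; these two facts are logically equivalent (contrapositives of one another), so the arguments are the same in substance.
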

\begin{proof} The result can be deduced from the proof of Mostow's rigidity \cite{Mostowbook}. We provide here a brief argument based on Thurston's work on Lipschitz maps between hyperbolic surfaces (\cite{Thurstondistance}).\\
\indent If $M$ is Fuchsian $\ell_{r}(\gamma)=\ell_{l}(\gamma)$ for every $\gamma \in \pi_{1}(S)$, hence the H\"older exponent is equal to $1$. On the other hand, if $M$ is not Fuchsian, there exists a curve $\gamma \in \pi_{1}(S)$ such that $\ell_{l}(\gamma)>\ell_{r}(\gamma)$, hence $\alpha(M)<1$.
\end{proof}

\indent Before studying the asymptotics of the H\"older exponent along rays of quadratic differentials, we want to give a new interpretation of the H\"older exponent that is more related to anti-de Sitter geometry. \\
\indent Let $\rho=(\rho_{r}, \rho_{l})$ be the holonomy representation of a GHMC anti-de Sitter structure. Let us suppose first that $\rho_{l}\neq \rho_{r}$. Since $\rho_{l}$ and $\rho_{r}$ are the holonomies of hyperbolic structures on $S$, for every $\gamma \in \pi_{1}(S)$, the elements $\rho_{l}(\gamma)$ and $\rho_{r}(\gamma)$ are hyperbolic isometries of the hyperbolic plane. Therefore, there exist $A,B \in \PSL(2,\R)$ such that
\[
	A\rho_{l}(\gamma)A^{-1}=\begin{pmatrix}
				e^{\ell_{l}(\gamma)/2} & 0 \\
					0 & e^{-\ell_{l}(\gamma)/2}
				\end{pmatrix} \ \ \ \ \ \ \ \ \ \ B\rho_{r}(\gamma)B^{-1}=\begin{pmatrix}
									e^{\ell_{r}(\gamma)/2} & 0\\
										0 & e^{-\ell_{r}(\gamma)/2} 
												\end{pmatrix}\ .
\]
We thus notice that the isometry of $AdS_{3}$ given by $\rho(\gamma)=(\rho_{l}(\gamma), \rho_{r}(\gamma))$ leaves two space-like geodesics invariant
\[
	\sigma^{*}(t)=A\begin{pmatrix}
			e^{t} & 0 \\
			0 & e^{-t} 
			\end{pmatrix}B^{-1} \ \  \ \ \ \ \  \text{and} \ \ \ \ \ \ \sigma(t)=A\begin{pmatrix}
											0 & e^{t} \\
											e^{-t} & 0 
											\end{pmatrix}B^{-1} \ .
\]
An easy computation shows that the isometry $\rho(\gamma)$ acts on $\sigma^{*}$ by translation with translation length 
\[
	\beta^{*}(\gamma)=\frac{|\ell_{l}(\gamma)-\ell_{r}(\gamma)|}{2}
\]
and acts by translation on $\sigma$ with translation length 
\[
	\beta(\gamma)=\frac{\ell_{l}(\gamma)+\ell_{r}(\gamma)}{2} \ .
\]
\indent We claim that only the geodesic $\sigma$ is contained in the convex hull of the limit set $\Lambda_{\rho}$. Recall that the limit set can be constructed as the graph of the homeomorphism $\phi: \R\Pp^{1} \rightarrow \R\Pp^{1}$ (\cite{Mess}) such that
\[
	\rho_{r}(\gamma)\circ \phi=\phi \circ \rho_{l}(\gamma) \ \ \ \ \ \text{for every} \ \ \ \gamma \in \pi_{1}(S) \ .
\]
In particular, $\phi$ sends the attactive (resp. repulsive) fixed point of $\rho_{l}(\gamma)$ into the attractive (resp. repulsive) fixed point of $\rho_{r}(\gamma)$. Therefore, we must have
\[
	\phi(A[1:0])=B[1:0] \ \ \ \ \text{and} \ \ \ \ \phi(A[0:1])=B[0:1] \ .
\]	
Now, the geodesic $\sigma$ has ending points
\[
	\sigma(-\infty)=(A[0:1],B[0:1]) \in \R\Pp^{1} \times \R\Pp^{1}
\]
and
\[
	 \sigma(+\infty)=(A[1:0], B[1:0]) \in \R\Pp^{1} \times \R\Pp^{1} \ ,
\]
whereas the geodesic $\sigma^{*}$ has ending points
\[
	\sigma^{*}(-\infty)=(A[0:1],B[1:0]) \in \R\Pp^{1} \times \R\Pp^{1}
\]
and
\[
	 \sigma^{*}(+\infty)=(A[1:0], B[0:1]) \in \R\Pp^{1} \times \R\Pp^{1}
\]
hence only the ending points of $\sigma$ lie on the limit curve $\Lambda_{\rho}$. As a consequnce, $\sigma$ is contained in the convex hull of $\Lambda_{\rho}$ and its projection is a closed space-like geodesic in the convex core of $M$. On the other hand, the geodesic $\sigma^{*}$ does not even belong to the domain of dependence of $\Lambda_{\rho}$. In fact, it it easy to check that the dual space-like plane of any point of $\sigma^{*}$ contains the geodesic $\sigma$, thus its boundary at infinity is not disjoint from the limit curve $\Lambda_{\rho}$.\\
\\
\indent In the special case, when $\rho_{r}=\rho_{l}$, the point $[Id] \in AdS_{3}$ is fixed and its dual space-like plane $P_{0}$ is left invariant. By definition of the dual plane (see Section \ref{sec:background}),
\[
	P_{0}=\{ A \in \PSL(2, \R) \ | \ \trace(A)=0 \}
\]
is the dual of $[Id] \in AdS_{3}$ and it is easy to check that it is a copy of the hyperbolic plane. With this identification, $\rho(\gamma)$ acts on $P_{0}$ as the hyperbolic isometry $\rho_{r}(\gamma)=\rho_{l}(\gamma)$ does on $\h^{2}$. \\
\\
\indent We thus obtain another way of computing the H\"older exponent of a GHMC anti-de Sitter manifold:
\begin{prop}\label{prop:formulanuova} Let $M$ be a GHMC anti-de Sitter manifold with holonomy $\rho$. Let $\beta(\gamma)$ and $\beta^{*}(\gamma)$ be the translation lengths of the isometries $\rho(\gamma)$ for every $\gamma \in \pi_{1}(S)$. Then
\[
	\alpha(M)=\inf_{\gamma \in \pi_{1}(S)}\frac{\beta(\gamma)-\beta^{*}(\gamma)}{\beta(\gamma)+\beta^{*}(\gamma)} \ .
\]
\end{prop}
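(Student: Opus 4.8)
The plan is to deduce the formula directly from the explicit expression \eqref{eq:formula_HolM} for $\alpha(M)$, using the two invariant space-like geodesics $\sigma$ and $\sigma^{*}$ and their translation lengths already computed in the discussion preceding the statement. The whole content of the argument is really the geometric identification carried out above (that $\rho(\gamma)$ leaves exactly these two space-like geodesics invariant, and that it translates along them with lengths $\beta(\gamma)$ and $\beta^{*}(\gamma)$ respectively); once this is granted, the proposition is a one-line algebraic manipulation, so I do not expect any genuine obstacle.

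Concretely, assume first $\rho_{l}\neq \rho_{r}$. Recall that the computation above gives
\[
	\beta(\gamma)=\frac{\ell_{l}(\gamma)+\ell_{r}(\gamma)}{2}, \qquad \beta^{*}(\gamma)=\frac{|\ell_{l}(\gamma)-\ell_{r}(\gamma)|}{2} \ .
\]
I would then invoke the elementary identities $a+b-|a-b|=2\min\{a,b\}$ and $a+b+|a-b|=2\max\{a,b\}$, valid for all $a,b\geq 0$, to obtain
\[
	\frac{\beta(\gamma)-\beta^{*}(\gamma)}{\beta(\gamma)+\beta^{*}(\gamma)}=\frac{\min\{\ell_{l}(\gamma),\ell_{r}(\gamma)\}}{\max\{\ell_{l}(\gamma),\ell_{r}(\gamma)\}}=\min\left\{\frac{\ell_{r}(\gamma)}{\ell_{l}(\gamma)},\frac{\ell_{l}(\gamma)}{\ell_{r}(\gamma)}\right\} \ .
\]
Taking the infimum over $\gamma\in\pi_{1}(S)$ and comparing with Equation \eqref{eq:formula_HolM} (which in turn rests on Theorem \ref{teo:formulavecchia}) yields $\alpha(M)=\inf_{\gamma}\frac{\beta(\gamma)-\beta^{*}(\gamma)}{\beta(\gamma)+\beta^{*}(\gamma)}$, as claimed.

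It remains to handle the Fuchsian case $\rho_{l}=\rho_{r}$. Here $\ell_{l}(\gamma)=\ell_{r}(\gamma)$ for every $\gamma$, so $\beta^{*}(\gamma)=0$ and the ratio is identically equal to $1$, in agreement with $\alpha(M)=1$; equivalently, one notes that in this degenerate situation the role of the invariant space-like object is played by the totally geodesic plane $P_{0}$ dual to the fixed point $[\Id]$, as observed before the statement, and the formula degenerates accordingly. The only point deserving a word of care is the verification that the quantity $\min\{\ell_{r}/\ell_{l},\ell_{l}/\ell_{r}\}$ in \eqref{eq:formula_HolM} coincides with $\min\{\ell_{l},\ell_{r}\}/\max\{\ell_{l},\ell_{r}\}$, which is immediate since the smaller of a positive number and its reciprocal is always at most $1$. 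Hence there is no analytic difficulty: the proposition is a bookkeeping statement translating the known length formula for $\alpha$ into anti-de Sitter terms.
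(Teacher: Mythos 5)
Your proposal is correct and follows exactly the route the paper intends: the paper's own proof is the one-line remark that the statement is ``a direct consequence of the explicit formulas for $\beta(\gamma)$ and $\beta^{*}(\gamma)$ and Theorem \ref{teo:formulavecchia},'' and your computation via $a+b\mp|a-b|=2\min\{a,b\}$ (resp.\ $2\max\{a,b\}$) is precisely the algebra being left implicit there. You simply spell out the details, including the Fuchsian case, which the paper omits.
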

\begin{proof}This is a direct consequence of the explicit formulas for $\beta(\gamma)$ and $\beta^{*}(\gamma)$ and Theorem \ref{teo:formulavecchia}.
\end{proof}

\indent We can now describe the asymptotic behaviour of the H\"older exponent:
\begin{teo}Let $M_{t}$ be the family of GHMC anti-de Sitter manifolds parameterised by the ray $(h,tq_{1}) \in T^{*}\T(S)$ for a non-zero quadratic differential $q_{1}$. Then 
\[
	\lim_{t \to +\infty}\alpha(M_{t})=0 \ .
\]
\end{teo}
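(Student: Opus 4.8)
The plan is to exploit the geometric interpretation of the H\"older exponent provided by Proposition \ref{prop:formulanuova} together with Remark \ref{oss}, which recasts $\alpha(M_{t})$ as an infimum over measured laminations of $\min\{\ell_{r}^{t}(\mu)/\ell_{l}^{t}(\mu), \ell_{l}^{t}(\mu)/\ell_{r}^{t}(\mu)\}$, where $\ell_{l}^{t}, \ell_{r}^{t}$ are the hyperbolic lengths for the left and right metrics of $M_{t}$. The strategy is to produce, for each $t$, a test lamination (or simple closed curve) that is very short for one metric and long for the other, so that the corresponding ratio tends to $0$; since $\alpha(M_{t})$ is an infimum, this forces $\alpha(M_{t}) \to 0$.

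The natural candidate is the vertical (or horizontal) foliation of $q_{0}$. The key input is Proposition \ref{prop:rel_para}: the left and right metrics of $M_{t}$ are the two endpoints of the harmonic-map factorisation of the minimal Lagrangian map with center the conformal class of $I_{t}$ and second fundamental form $\Re(tq_{0})$. First I would combine Proposition 3.8 — which gives $I_{t}/t \to |q_{0}|$ outside the zeros — with the relation $II_{t} = \Re(tq_{0})$ to understand the first and third fundamental forms $III_{t} = I_{t}(B_{t}\cdot, B_{t}\cdot)$; in the natural $q_{0}$-adapted coordinates the shape operator $B_{t}$ has eigenvalues tending to $\pm 1$ (Corollary \ref{cor:monotonia}), so $I_{t}$, $II_{t}$ and $III_{t}$ all become comparable to $|q_{0}|$ after rescaling by $t$. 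The left and right metrics are $h_{l} = I_t((\operatorname{Id}+B_t)\cdot,(\operatorname{Id}+B_t)\cdot)$ and $h_{r}=I_t((\operatorname{Id}-B_t)\cdot,(\operatorname{Id}-B_t)\cdot)$ (the standard formulas relating the maximal surface data to the two hyperbolic metrics in Mess' parameterisation). Along a leaf of the vertical foliation of $q_{0}$, where $\Re(tq_{0})$ is negative definite, $\operatorname{Id}-B_t$ degenerates while $\operatorname{Id}+B_t$ stays bounded below, so the right metric stays comparable to $t|q_{0}|$ along that leaf while the left metric collapses; dually, horizontal leaves are collapsed by $h_{l}$. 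Making this quantitative, one of the two ratios $\ell_{r}^{t}(\mu)/\ell_{l}^{t}(\mu)$ or its reciprocal, evaluated on $\mu = $ vertical foliation of $q_{0}$, is $O(1/t)$ or smaller.

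The main obstacle is controlling the lengths near the zeros of $q_{0}$, where $|q_{0}|$ degenerates and Proposition 3.8 no longer gives uniform control; a priori the harmonic maps could behave wildly on these small disks and spoil the length estimate. I expect this to be handled by a compactness/energy argument: the relevant harmonic map from $(S, h)$ to $(S, h_l)$ has Hopf differential $tq_0/2$, so its total energy grows like $t$ times the area of $|q_0|$, and standard estimates for harmonic maps with prescribed Hopf differential (as in the Wolf/Minsky picture of harmonic maps to $\R$-trees) show that, after rescaling, the pulled-back metric converges to $|q_0|$ in a strong enough sense — including near the zeros, where the zero of $q_0$ of order $k$ produces a cone point of angle $(k+2)\pi$ in the limiting flat metric — so that hyperbolic lengths of laminations transverse to the foliation are controlled by their $|q_0|$-lengths up to a multiplicative constant. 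An alternative, softer route that avoids the zeros entirely: pick a simple closed curve $\gamma$ whose geodesic representative for $|q_0|$ avoids a neighbourhood of the zeros and is almost tangent to the vertical foliation (e.g. a curve with very small horizontal variation); on such a $\gamma$, Proposition 3.8 applies verbatim on the (compact) trace of the geodesic, giving $\ell_r^t(\gamma) \asymp c(\gamma)\sqrt{t}$ while $\ell_l^t(\gamma) \geq c'\sqrt{t}$ is unavoidable but one shows $\ell_l^t(\gamma)/\ell_r^t(\gamma) \to \infty$ by using that the left metric contracts distances in the vertical direction by a factor going to $0$. Either way, once a single sequence of test curves with exploding length ratio is produced, the infimum definition of $\alpha(M_t)$ closes the argument.
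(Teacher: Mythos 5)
Your high-level strategy is the same as the paper's: test the ratio of length functions on the measured foliations of $q_{0}$, for which one of the two hyperbolic metrics stretches and the other collapses. But the paper's proof consists of a single citation that your proposal tries to reconstruct by hand, and the reconstruction has a genuine gap. The paper invokes Wolf's compactification of Teichm\"uller space: since the identity maps $(S,h)\to(S,h_{l,t})$ and $(S,h)\to(S,h_{r,t})$ are harmonic with Hopf differentials $\pm tq_{0}$ (Proposition \ref{prop:rel_para}), one has $\ell_{l,t}(\mu)/2\sqrt{t}\to\iota(\lambda_{0}^{+},\mu)$ for every simple closed curve and hence, by homogeneity and density (Remark \ref{oss}), for every measured foliation $\mu$. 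Plugging in $\mu=\lambda_{0}^{+}$ itself gives numerator $\iota(\lambda_{0}^{+},\lambda_{0}^{+})=0$ and denominator $\iota(\lambda_{0}^{-},\lambda_{0}^{+})\neq 0$. The crucial point is that the test object must be the foliation itself (or a sequence of curves converging to it projectively, together with enough uniformity to exchange the limit in $t$ with the infimum). Your ``softer route'' with a single fixed simple closed curve $\gamma$ avoiding the zeros cannot work: for any fixed $\gamma$ with $\iota(\lambda_{0}^{\pm},\gamma)\neq 0$, both $\ell_{l,t}(\gamma)$ and $\ell_{r,t}(\gamma)$ grow like $2\sqrt{t}$ times the respective intersection numbers, so the ratio converges to the finite nonzero constant $\iota(\lambda_{0}^{+},\gamma)/\iota(\lambda_{0}^{-},\gamma)$ --- the vertical contraction is exactly compensated by the $\sqrt{t}$ horizontal stretch acting on the (nonzero) horizontal variation of $\gamma$. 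Your assertion that $\ell_{l}^{t}(\gamma)/\ell_{r}^{t}(\gamma)\to\infty$ for such a $\gamma$ is false, as is the claimed rate $O(1/t)$ for the foliation (Wolf's theorem gives no rate, and none is needed).

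The first route you sketch (deriving the length asymptotics from $I_{t}$, $B_{t}$ and the Mess formulas for $h_{l},h_{r}$) is also not closable from what is available in the paper. To see the collapse of $h_{l}$ along a vertical leaf from $h_{l}(v,v)=(1-\lambda_{t})^{2}I_{t}(v,v)$ you would need $t(1-\lambda_{t})^{2}\to 0$, i.e.\ a quantitative rate for $\lambda_{t}\to 1$; Proposition 3.8 and Corollary \ref{cor:monotonia} give only pointwise (monotone) convergence with no rate, and no uniformity near the zeros of $q_{0}$. Upgrading this to convergence of lengths of geodesic representatives of laminations --- including the cone-point analysis at the zeros and the harmonic-maps-to-$\R$-trees machinery you allude to --- would amount to reproving Wolf's theorem, which is precisely the nontrivial input the paper imports as a black box. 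If you want a self-contained argument, the honest fix is to quote Wolf (or Minsky) for the asymptotic $\ell_{l,t}(\mu)\sim 2\sqrt{t}\,\iota(\lambda_{0}^{+},\mu)$ on all of $\mathcal{ML}(S)$ and then evaluate at $\mu=\lambda_{0}^{+}$.
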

\begin{proof}Let $\rho_{t}=(\rho_{l,t}, \rho_{r,t})$ be the holonomy representation of $M_{t}$. Let $h_{l,t}$ and $h_{r,t}$ be the hyperbolic metrics on $S$ with holonomy $\rho_{l,t}$ and $\rho_{r,t}$, respectively. By Proposition \ref{prop:rel_para}, we can suppose that the identity maps
\[
	id:(S, h) \rightarrow (S, h_{l, t}) \ \ \ \ \ id:(S,h) \rightarrow (S, h_{r,t})
\]
are harmonic with Hopf differentials $itq_{1}$ and $-itq_{1}$, respectively.\\
Associated to $itq_{1}$ are two measured foliations $\lambda^{+}_{t}$ and $\lambda^{-}_{t}$: in a natural conformal coordinate $z=x+iy$ outside the zeros of $iq_{1}$, we can express $itq_{1}=dz^{2}$. The foliations are then given by
\[
	\lambda^{+}_{t}=(y=const, z^{*}|dy|) \ \ \ \ \text{and} \ \ \ \ \ \lambda^{-}_{t}=(x=const, z^{*}|dx|) \ .
\]
Notice, in particular, that the support of the foliation is fixed for every $t>0$ and only the measure changes, being it multiplied by $t^{1/2}$. We can thus write
\[
	\lambda^{+}_{t}=t^{1/2}\lambda^{+}_{1} \ \ \ \ \text{and} \ \ \ \ \lambda^{-}_{t}=t^{1/2}\lambda^{-}_{1}
\]
where $\lambda^{\pm}_{1}$ are the measured foliations associated to $iq_{1}$. Moreover, multiplying a quadratic differential by $-1$ interchanges the two foliations. \\
\indent By Wolf's compactification of Teichm\"uller space (Section 4.2 \cite{Wolf_harmonic}), we know that
\[
	\lim_{t \to +\infty}\frac{\ell_{l,t}(\gamma)}{2t^{1/2}}=\iota(\lambda_{1}^{+}, \gamma)
\]
for every $\gamma \in \pi_{1}(S)$, where $\ell_{l,t}(\gamma)$ denotes the length of the geodesic representative of $\gamma$ with respect to the hyperbolic metric $h_{l,t}$. By density, the same holds for every measured foliation on $S$. Therefore, using Remark \ref{oss}, 
\begin{align*}
	0\leq \limsup_{t\to +\infty}\alpha(M_{t})&=\limsup_{t\to +\infty} \inf_{ \mu \in \mathcal{MF}(S)}\min\left\{\frac{\ell_{l,t}(\mu)}{\ell_{r,t}(\mu)}, \frac{\ell_{r,t}(\mu)}{\ell_{l,t}(\mu)}\right\} \\
		&\leq \limsup_{t \to +\infty}\frac{\ell_{l,t}(\lambda_{1}^{+})}{\ell_{r,t}(\lambda_{1}^{+})}
		=\limsup_{t \to +\infty}\frac{\ell_{l,t}(\lambda_{1}^{+})}{2t^{1/2}}\frac{2t^{1/2}}{\ell_{r,t}(\lambda_{1}^{+})}\\
		&=\frac{\iota(\lambda_{1}^{+}, \lambda_{1}^{+})}{\iota(\lambda_{1}^{-}, \lambda_{1}^{+})}=0
\end{align*}
because every measured foliation has vanishing self-intersection and $\iota(\lambda_{1}^{-}, \lambda_{1}^{+})\neq 0$ by construction, since $\lambda_{1}^{\pm}$ are the horizontal and vertical foliations of a quadratic differential.
\end{proof}

\section{Critical exponents}\label{sec:entropy}
In this section we study the asymptotic behaviour of the Lorentzian Hausdorff dimension of the limit curve $\Lambda_{\rho}$ associated to a GHMC anti-de Sitter manifold. 

\subsection{Lorentzian Hausdorff dimension}
Let $M$ be a GHMC anti-de Sitter manifold with holonomy representation $\rho$. In Section \ref{sec:background}, we saw that the limit set of the action of $\rho(\pi_{1}(S))$ is a simple closed curve $\Lambda_{\rho}$ in the boundary at infinity of $AdS_{3}$. Moreover, $\Lambda_{\rho}$ is the graph of a locally Lipschitz function, thus its Hausdorff dimension is always $1$. Recently, Glorieux and Monclair defined a notion of Lorenztian Hausdorff dimension, that manages to describe how far the representiation $\rho$ is from being Fuchsian. This resembles the usual definition of Hausdorff dimension, where instead of considering coverings consisting of Euclidean balls, they used Lorentzian ones (\cite[Section 5.1]{GM_Hdim}). They also gave an equivalent definition in terms of critical exponent related to a distance-like function in $AdS_{3}$.

\begin{defi}Let $\Lambda_{\rho}\subset \partial_{\infty}AdS_{3}$ be the limit set of the holonomy of a GHMC anti-de Sitter structure. The Lorentzian distance 
\[
	d_{AdS}: \mathcal{C}(\Lambda_{\rho}) \times \mathcal{C}(\Lambda_{\rho}) \rightarrow \R_{\geq 0}
\]
is defined as follows. Let $x,y \in \mathcal{C}(\Lambda_{\rho})$ and let $\gamma_{x,y}$ be the unique geodesic connecting $x$ and $y$. We put
\[
	d_{AdS}(x,y):=\begin{cases}
			\length(\gamma_{x,y}) \ \ \ \ \ \text{if $\gamma_{x,y}$ is space-like} \\
			0 \ \ \ \ \ \ \ \ \  \ \ \ \ \ \ \ \ \ \text{otherwise}
			  \end{cases}
\]
\end{defi}
The function $d_{AdS}$ is a distance-like function in the following sense: it is symmetric, and there exists a constant $k_{\rho}$ depending on the representation $\rho$ such that
\[
  d_{AdS}(x,z) \leq d_{AdS}(x,y)+d_{AdS}(y,z)+k_{\rho}
\]
for every $x,y,z \in \mathcal{C}(\Lambda_{\rho})$ (\cite[Theorem 3.4]{GM_Hdim}). 

\begin{defi}The critical exponent of $\rho(\pi_{1}(S))$ relative to the Lorentzian distance $d_{AdS}$ is 
\[
	\delta_{AdS}(\rho)=\limsup_{R \to +\infty}\frac{1}{R}\log(\#\{\gamma \in \pi_{1}(S) \ | \ d_{AdS}(\rho(\gamma)x_{0}, x_{0}) \leq R\}) \ ,
\]
where $x_{0}\in \mathcal{C}(\Lambda_{\rho})$ is a fixed base point.
\end{defi}

\begin{oss} By the above weak triangle inequality, the critical exponent $\delta_{AdS}(\rho)$ does not depend on the choice of the basepoint $x_{0}$. 
\end{oss}

The link between the critical exponent for the Lorentzian distance $d_{AdS}$ and the Lorentzian Hausdorff dimension is provided by the following result:
\begin{teo}[Theorem 1.1 \cite{GM_Hdim}]\label{thm:equivalence}Let $\Lambda_{\rho}$ be the limit set of the holonomy representation $\rho$ of a GHMC anti-de Sitter structure. Then
\[
	\LHdim(\Lambda_{\rho})=\delta_{AdS}(\rho) \ .
\]
\end{teo}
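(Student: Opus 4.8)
The plan is to transplant to the Lorentzian setting the classical dictionary of Patterson, Sullivan and Bishop--Jones between the critical exponent of a discrete group and the Hausdorff dimension of its limit set. The device linking the two sides is a \emph{shadow lemma} comparing the quasi-distance $d_{AdS}$ on $\mathcal{C}(\Lambda_{\rho})$ with the Lorentzian balls of $\partial_{\infty}AdS_{3}$ entering the definition of $\LHdim$. Fix a base point $x_{0}\in \mathcal{C}(\Lambda_{\rho})$, and for $\gamma\in\pi_{1}(S)$ let $\mathcal{O}(\rho(\gamma)x_{0})\subset \Lambda_{\rho}$ be the shadow, seen from $x_{0}$, of a ball of fixed radius $r_{0}$ centred at $\rho(\gamma)x_{0}$. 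The first step is to show that $\mathcal{O}(\rho(\gamma)x_{0})$ is sandwiched between two Lorentzian balls of $\Lambda_{\rho}$ of radius $\asymp e^{-d_{AdS}(\rho(\gamma)x_{0},x_{0})}$, with multiplicative constants independent of $\gamma$. This requires a Morse-type control of spacelike geodesics inside the convex core and uses in an essential way that the action of $\rho(\pi_{1}(S))$ on $\mathcal{C}(\Lambda_{\rho})$ is cocompact --- the convex core of a GHMC manifold is compact --- so that the local geometry is uniformly bounded. I expect this shadow lemma to be the principal difficulty, precisely because $d_{AdS}$ is only a quasi-distance: it is symmetric but obeys the triangle inequality only up to the additive constant $k_{\rho}$, and it degenerates on non-spacelike pairs, so the standard hyperbolic estimates must be rebuilt around the causal structure of $AdS_{3}$.

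Granting the shadow lemma, the bound $\LHdim(\Lambda_{\rho})\le E(d_{AdS})$ is a covering argument. Fix $s>E(d_{AdS})$ and $\varepsilon>0$ with $s>E(d_{AdS})+\varepsilon$. For $R$ large, the orbit points $\rho(\gamma)x_{0}$ with $d_{AdS}(\rho(\gamma)x_{0},x_{0})\in[R,R+1]$ have shadows covering $\Lambda_{\rho}$ (a standard consequence of following a spacelike geodesic from $x_{0}$ towards a prescribed point of $\Lambda_{\rho}$ and applying the pigeonhole principle), and by the definition of $E(d_{AdS})$ there are at most $e^{(E(d_{AdS})+\varepsilon)R}$ of them. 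Each such shadow sits in a Lorentzian ball of radius $\asymp e^{-R}$, so the $s$-dimensional Lorentzian Hausdorff sum over this cover is $\lesssim e^{(E(d_{AdS})+\varepsilon)R}e^{-sR}\to 0$ as $R\to+\infty$. Hence the $s$-dimensional Lorentzian Hausdorff measure of $\Lambda_{\rho}$ vanishes and $\LHdim(\Lambda_{\rho})\le s$; letting $s\downarrow E(d_{AdS})$ gives the inequality.

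For the reverse inequality I would carry out the Patterson--Sullivan construction. Consider the Poincar\'e series $\sum_{\gamma\in\pi_{1}(S)}e^{-s\,d_{AdS}(\rho(\gamma)x_{0},x_{0})}$; after Patterson's trick of inserting a slowly varying weight one may assume it diverges at $s=E(d_{AdS})$, and any weak-$\ast$ limit as $s\downarrow E(d_{AdS})$ of the normalised atomic measures $\bigl(\sum e^{-s\,d_{AdS}}\bigr)^{-1}\sum_{\gamma}e^{-s\,d_{AdS}(\rho(\gamma)x_{0},x_{0})}\,\delta_{\rho(\gamma)x_{0}}$ is a probability measure $\mu$ supported on $\Lambda_{\rho}$ which transforms conformally of exponent $E(d_{AdS})$ for the Busemann-type cocycle of $d_{AdS}$. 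Combined with the shadow lemma, this quasi-conformality yields, for every $\varepsilon>0$, a Frostman estimate $\mu(B_{L}(\xi,r))\le C_{\varepsilon}\,r^{E(d_{AdS})-\varepsilon}$ for $\xi\in\Lambda_{\rho}$ and $r$ small; the mass distribution principle then forces $\mathcal{H}^{E(d_{AdS})-\varepsilon}_{L}(\Lambda_{\rho})>0$, so $\LHdim(\Lambda_{\rho})\ge E(d_{AdS})-\varepsilon$, and $\varepsilon\downarrow 0$ finishes the proof. The two inequalities together give $\LHdim(\Lambda_{\rho})=E(d_{AdS})$; the base-point independence of $E(d_{AdS})$ claimed in the statement is then immediate, since $\LHdim(\Lambda_{\rho})$ plainly does not involve $x_{0}$ --- alternatively, replacing $x_{0}$ changes each $d_{AdS}(\rho(\gamma)x_{0},x_{0})$ by a uniformly bounded amount, which leaves the exponential growth rate unchanged.
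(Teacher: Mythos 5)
This statement is not proved in the paper at all: it is quoted verbatim as Theorem~1.1 of \cite{GM_Hdim} (Glorieux--Monclair), and the paper uses it as a black box in Lemma~\ref{lm:comparison} and Theorem~\ref{thm:asy_entropy}. So there is no ``paper's own proof'' to compare against; the relevant comparison is with the cited source. Your outline does reproduce the architecture that Glorieux and Monclair actually follow: a Lorentzian shadow lemma relating $d_{AdS}$-displacement of orbit points to the size of Lorentzian balls on $\Lambda_{\rho}$, a covering/pigeonhole argument for $\LHdim(\Lambda_{\rho})\le E(d_{AdS})$, and a Patterson--Sullivan density plus Frostman/mass-distribution argument for the reverse inequality. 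As a roadmap it is faithful, and your closing remark on base-point independence (via the coarse triangle inequality and invariance of $d_{AdS}$ under $\rho(\gamma)$) is correct.

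As a proof, however, the proposal is a skeleton rather than an argument: the two load-bearing ingredients are asserted, not established, and they are exactly where the content of \cite{GM_Hdim} lies. First, the shadow lemma itself --- you correctly identify it as the principal difficulty, but nothing in your text addresses how to control shadows when the quasi-distance vanishes on non-spacelike pairs, nor how the two ``radii'' of a Lorentzian ball (which is a product of intervals in $\R\Pp^{1}\times\R\Pp^{1}$, not a single-radius object) are each comparable to $e^{-d_{AdS}}$; the acausality of $\Lambda_{\rho}$ and the cocompactness of the action on $\mathcal{C}(\Lambda_{\rho})$ must enter quantitatively here. Second, the Patterson--Sullivan step presupposes a Busemann-type cocycle and a conformal transformation rule for a function that satisfies the triangle inequality only up to the additive constant $k_{\rho}$; making the quasi-conformality precise enough to run the Frostman estimate is nontrivial and is not sketched. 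So the proposal should be read as a correct high-level summary of the strategy of the cited theorem, not as a self-contained proof.
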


\subsection{Critical exponent of the maximal surface}
Another natural quantity that can be associated to a GHMC anti-de Sitter structure is the critical exponent that computes the exponential growth rate of a point in the orbit of $\pi_{1}(S)$ with respect to the Riemannian metric induced on the unique maximal surface. We will use this in the next subsection to provide an upper-bound for the Lorentzian Hausdorff dimension of the limit set.\\
\\
\indent Let $g$ be a Riemannian metric or a flat metric with conical singularity on the surface $S$. Let $\tilde{S}$ be the universal cover of $S$. The critical exponent of $\pi_{1}(S)$ relative to $g$ can be defined as
\[
	\delta(g)=\limsup_{R \to +\infty}\frac{1}{R}\log(\#\{ \gamma \in \pi_{1}(S) \ | \ d_{g}(\gamma\cdot x_{0}, x_{0})\leq R \}) \in \R^{+}
\]
where $x_{0} \in \tilde{S}$ is an arbitrary base point. \\
\\
\indent We introduce the function $\delta:T^{*}\T(S) \rightarrow \R$ that associates to a point $(h,q) \in T^{*}\T(S)$ the critical exponent relative to the Riemannian metric $I=e^{2u}h$, where $u$ is the solution to Equation (\ref{eq:PDE}). Namely, $\delta(h,q)$ is the critical exponent relative to the Riemannian metric induced on the unique maximal surface embedded in the GHMC anti-de Sitter manifold corresponding to $(h,q)$. By identifying $T^{*}\T(S)$ with $\mathcal{GH}(S)$ (see Section \ref{sec:background}), we will often denote this map as $\delta(\rho)$, where $\rho$ is the holonomy representation of the corresponding GHMC anti-de Sitter structure.\\
\\
\indent Notice that, since in Equation (\ref{eq:PDE}) only the $h$-norm of the quadratic differential $q$ appears, the function $\delta$ is invariant under the natural $S^{1}$ action on $T^{*}\T(S)$ given by $(h,q) \mapsto (h, e^{i\theta}q)$. 

\subsection{Estimates for the induced metric on the maximal surface}
In this section we study the asymptotic behaviour of the induced metric $I_{t}$ on the maximal surface $\Sigma_{t}$ along a ray $tq_{1}$ of quadratic differentials. We deduce also estimates for the principal curvatures of $\Sigma_{t}$.\\
\\
Let us start finding a lower bound for $I_{t}$.
\begin{prop}\label{prop:subsolution}Let $u_{t}$ be the solution to Equation (\ref{eq:PDE}) for $q=tq_{1}$. Then
\[
	u_{t}>\frac{1}{2}\log(t\|q_{1}\|_{h}) \ .
\]
In particular, $I_{t}>t|q_{1}|$. 
\end{prop}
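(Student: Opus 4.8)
The plan is to produce $v_t:=\tfrac12\log\!\big(t\|q_0\|_h\big)$ as a subsolution of Equation~(\ref{eq:PDE}) with $q=tq_0$ and to compare it with $u_t$ via the maximum principle. First I would use that $h$ is hyperbolic, so $K_h\equiv-1$ and (\ref{eq:PDE}) reads $\Delta_h u_t=F(u_t)$, where $F(s)=e^{2s}-e^{-2s}t^2\|q_0\|_h^2-1$; note that $F$ (which depends on the point only through $\|q_0\|_h$) is strictly increasing in $s$, since $F'(s)=2e^{2s}+2e^{-2s}t^2\|q_0\|_h^2>0$. The central point is that $v_t$ solves the \emph{same} equation on $S\setminus Z$, where $Z$ denotes the finite nonempty zero set of $q_0$. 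Indeed, in a local conformal coordinate $z$ for $h$ with $h=e^{2\varphi}|dz|^2$ and $q_0=\phi_0\,dz^2$ ($\phi_0$ holomorphic), one has $\|q_0\|_h=e^{-2\varphi}|\phi_0|$, hence $v_t=\tfrac12\log t+\tfrac12\log|\phi_0|-\varphi$; since $\log|\phi_0|$ is harmonic off the zeros of $\phi_0$ and $\Delta_0\varphi=e^{2\varphi}$ is the curvature equation for $K_h\equiv-1$ (with $\Delta_0=\partial_x^2+\partial_y^2$), we get $\Delta_h v_t=-1$. On the other hand $e^{2v_t}=t\|q_0\|_h=e^{-2v_t}t^2\|q_0\|_h^2$, so $F(v_t)=-1$ as well; thus $\Delta_h v_t=F(v_t)$ on $S\setminus Z$. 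Note finally that $v_t\to-\infty$ at each point of $Z$, whereas $u_t$ is smooth on all of $S$.

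I would then set $w:=u_t-v_t$ on $S\setminus Z$, so $\Delta_h w=F(u_t)-F(v_t)$. Since $w\to+\infty$ near every point of $Z$, the infimum of $w$ over $S\setminus Z$ is finite and is attained at an interior point $p$; at $p$ one has $\Delta_h w(p)\ge0$, so $F(u_t(p))\ge F(v_t(p))$, and the monotonicity of $F$ gives $u_t(p)\ge v_t(p)$. Hence $w\ge w(p)\ge0$ everywhere, that is $u_t\ge\tfrac12\log(t\|q_0\|_h)$ on $S\setminus Z$ and $I_t\ge t|q_0|$ (the inequality being trivial on $Z$, where $|q_0|$ vanishes).

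To make the inequality strict I would suppose $w(p)=0$ and write $F(u_t)-F(v_t)=c(x)\,w$ with $c(x)=\int_0^1 F'\big(v_t+sw\big)\,ds>0$, a continuous function on $S\setminus Z$; then $w$ satisfies $\Delta_h w-c\,w=0$ with $c\ge0$, and since $w\ge0$ attains the interior minimum $0$ at $p$, the strong maximum principle forces $w\equiv0$ near $p$, hence on the connected set $S\setminus Z$. This contradicts $w\to+\infty$ along $Z$ (which is nonempty: a nonzero holomorphic quadratic differential on a genus $\tau\ge2$ surface has $4\tau-4>0$ zeros). Therefore $w>0$ on $S\setminus Z$, i.e.\ $u_t>\tfrac12\log(t\|q_0\|_h)$, and consequently $I_t=e^{2u_t}h>t\|q_0\|_h\,h=t|q_0|$ off $Z$, while on $Z$ one has $I_t>0=t|q_0|$ trivially.

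The difficulty is structural rather than computational: the natural lower barrier $v_t$ is singular — defined only on $S\setminus Z$ and blowing down to $-\infty$ at the zeros of $q_0$ — so the comparison has to be run on the noncompact surface $S\setminus Z$. It is exactly the blow-up of $w=u_t-v_t$ near $Z$ that saves the argument, both ensuring that the infimum of $w$ is attained at an interior point and producing the contradiction that promotes $u_t\ge v_t$ to the strict inequality. The remaining subtlety is verifying that $v_t$ is genuinely a solution of (\ref{eq:PDE}) on $S\setminus Z$, which relies on the holomorphicity of $q_0$ and on the normalisation $K_h\equiv-1$.
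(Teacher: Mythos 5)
Your proof is correct and follows essentially the same route as the paper: the same singular barrier $\tfrac12\log(t\|q_0\|_h)$, verified to solve the equation off the zeros of $q_0$, with the blow-up of $u_t-v_t$ near the zero set used both to run the comparison on the noncompact complement and to upgrade the inequality to a strict one via the strong maximum principle. You merely make explicit the comparison-principle step that the paper invokes without detail.
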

\begin{proof}The main idea of the proof lies on the fact that $\frac{1}{4}\log(\|tq_{1}\|^{2}_{h})$ is a solution to Equation (\ref{eq:PDE}), outside the zeros of $q_{1}$. To be precise, let $s_{t}:S\setminus q_{1}^{-1}(0) \rightarrow \R$ be the function such that
\[
	e^{2s_{t}}h=t|q_{1}|
\]
at every point. Then, outside the zeros of $q_{1}$, we have
\begin{align*}
	\Delta_{h}s_{t}&= h^{-1}\bar{\partial}\partial\log(\|tq_{1}\|_{h}^{2})= h^{-1}\bar{\partial}\partial[\log(t^{2}q_{1}\bar{q}_{1})-\log(h^{2})]\\
			&=-\frac{1}{2}\Delta_{h}\log(h)=K_{h}
\end{align*}
and
\[
	e^{2s_{t}}-t^{2}e^{-2s_{t}}\|q_{1}\|_{h}^{2}=t\|q_{1}\|_{h}-t\|q_{1}\|_{h}=0 \ ,
\]
hence $s_{t}$ is a solution of Equation (\ref{eq:PDE}) outside the zeros of $q_{1}$. We observe, moreover, that at the zeros of $q_{1}$, $s_{t}$ tends to $-\infty$. Therefore, the function
\[
	u^{-}=\begin{cases} 
			\frac{1}{4}\log(\|tq_{1}\|^{2}_{h}) \ \ \ \ \ \ \  \|tq_{1}\|_{h}\geq 1 \\
			0 \ \ \ \ \ \ \ \ \ \ \ \ \ \ \ \ \ \ \ \ \ \ \|tq_{1}\|_{h} < 1
		\end{cases}
\]
is a subsolution for Equation (\ref{eq:PDE}), being it the maximum of two subsolutions, and we deduce, in particular, that $u_{t}>s_{t}$.\\
\indent Now, the strong maximum principle (\cite[Thereom 2.3.1]{Jost}) implies that on any domain where $s_{t}$ is continuous up to the boundary, we have either $u_{t}>s_{t}$ or $u_{t}\equiv s_{t}$. Thus if $u_{t}(p)=s_{t}(p)$ for some $p \in S$ (and clearly $p$ cannot be a zero for $q_{1}$ in this case), then $u_{t}$ and $s_{t}$ must agree in the complement of the zeros of $q_{1}$, but this is not possible, since $s_{t}$ diverges to $-\infty$ near the zeros, whereas $u_{t}$ is smooth everywhere on $S$.\\
\indent In particular, we deduce that $I_{t}=e^{2u_{t}}h>e^{2s_{t}}h=t|q_{1}|$.
\end{proof}

\begin{cor}\label{cor:bound_lambda} Let $\lambda_{t}$ be the positive principal curvature of the maximal surface $\Sigma_{t}$, then $\lambda_{t}<1$.
\end{cor}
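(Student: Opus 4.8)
The plan is to read $\lambda_t$ directly off the Gauss--Codazzi data and then feed in the lower bound for $u_t$ just obtained. Since $\Sigma_t$ is maximal, its shape operator $B_t$ is trace-free, so its eigenvalues are $\pm\lambda_t$ and $\det(B_t)=-\lambda_t^2$. On the other hand, exactly as in the computation used to invert $\Psi$, with $q=tq_0$ and $I_t=e^{2u_t}h$ one has
\[
	\det(B_t)=\det\!\left(e^{-2u_t}h^{-1}\Re(tq_0)\right)=-e^{-4u_t}\|tq_0\|_h^2=-t^2e^{-4u_t}\|q_0\|_h^2
\]
at every point. Comparing the two expressions gives the pointwise identity $\lambda_t^2=t^2e^{-4u_t}\|q_0\|_h^2$.

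Next I would substitute Proposition \ref{prop:subsolution}. Outside the zeros of $q_0$ we have $u_t>\tfrac12\log(t\|q_0\|_h)$, hence $e^{2u_t}>t\|q_0\|_h>0$ and therefore $e^{-4u_t}<t^{-2}\|q_0\|_h^{-2}$. Plugging this into the identity above yields $\lambda_t^2<1$, i.e.\ $\lambda_t<1$, away from the zeros of $q_0$. At a zero of $q_0$ the quadratic differential $tq_0$ vanishes, so $\det(B_t)=0$ there and hence $\lambda_t=0<1$. This covers all of $S$ and proves the corollary.

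There is no genuine obstacle here: the statement is an immediate consequence of the strict pointwise inequality $e^{2u_t}>t\|q_0\|_h$ from Proposition \ref{prop:subsolution} together with the algebraic relation $\det(B_t)=-e^{-4u_t}\|tq_0\|_h^2$. The only point deserving a line of care is the behaviour at the zeros of $q_0$, where the comparison function of Proposition \ref{prop:subsolution} degenerates to $-\infty$; but since $u_t$ is smooth and $q_0$ vanishes there, the shape operator $B_t$ simply vanishes at those points, so the bound $\lambda_t<1$ holds trivially and no separate estimate is needed.
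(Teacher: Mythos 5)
Your proof is correct and follows essentially the same route as the paper: compute $\lambda_t^2=-\det(B_t)=t^2e^{-4u_t}\|q_0\|_h^2$ and invoke Proposition \ref{prop:subsolution}. Your extra remark about the zeros of $q_0$ (where $\|q_0\|_h=0$ forces $\lambda_t=0$) is a point the paper leaves implicit, and it is handled correctly.
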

\begin{proof}Recall that the shape operator of $\Sigma_{t}$ can be written as
\[
	B_{t}=I_{t}^{-1}II_{t}=e^{-2u_{t}}h^{-1}\Re(tq_{1}) \ .
\]
Therefore, $\lambda_{t}^{2}=-\det(B_{t})=e^{-4u_{t}}t^{2}\|q_{1}\|_{h}^{2}<1$, by the previous proposition.
\end{proof}

In order to find an upper bound for $I_{t}$, we introduce a new metric on the surface $S$. Let $U$ be a neighbourhood of the zeros of $q_{1}$. We consider a smooth metric $g$ on $S$ in the conformal class of $h$ such that $g=|q_{1}|$ in the complement of $U$ and $\|q_{1}\|_{g}^{2}\leq 1$ everywhere on $S$. This is possible because $\|q_{1}\|_{g}^{2}=1$ on $S\setminus U$ and it vanishes at the zeros of $q_{1}$. Let $w_{t}$ be half of the logarithm of the density of $I_{t}$ with respect to $g$, i.e $w_{t}:S \rightarrow \R$ satisfies
\[
	e^{2w_{t}}g=I_{t} \ .
\]
Comparing the curvature $K_{I_{t}}$ of the metric $I_{t}$ and the curvature of the metric $e^{2w_{t}}g$ (see Section \ref{subsec:para_max}), we deduce that $w_{t}$ is the solution to Equation (\ref{eq:PDE}), where the background metric is now $g$, i.e. 
\[
	\Delta_{g}w_{t}=e^{2w_{t}}-t^{2}e^{-2w_{t}}\|q_{1}\|_{g}^{2}+K_{g} \ .
\]
We can give an upper-bound to the induced metric $I_{t}$ by estimating the function $w_{t}$.

\begin{prop}\label{prop:supersol}Let $K$ be the minimum of the curvature of $g$ and let $S_{t}$ be the positive root of the polynomial $r_{t}(x)=x^{2}+Kx-t^{2}$. Then $e^{2w_{t}}\leq S_{t}$.
\end{prop}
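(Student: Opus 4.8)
The plan is to mirror the subsolution argument of Proposition~\ref{prop:subsolution}, producing this time a \emph{constant} supersolution of the equation solved by $w_{t}$ and concluding by the maximum principle. Recall that $w_{t}$ satisfies Equation~(\ref{eq:PDE}) with the background metric $g$ in place of $h$ (and still with $q=tq_{0}$, which is holomorphic for the common complex structure of $g$ and $h$), namely
\[
	\Delta_{g}w_{t}=e^{2w_{t}}-e^{-2w_{t}}\,\|tq_{0}\|_{g}^{2}+K_{g} \ ,
\]
where $K_{g}$ is the Gauss curvature of $g$. Since $S$ is compact and $g$ is smooth, $K:=\min_{S}K_{g}$ is finite (and in fact $K\leq 0$, as $K_{g}$ vanishes on $S\setminus U$, though this sign plays no role); moreover $r_{t}(0)=-t^{2}<0$ while $r_{t}(x)\to+\infty$, and the product of the two roots of $r_{t}$ is $-t^{2}<0$, so $r_{t}$ has a unique positive root $S_{t}$.

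The key step is to check that the constant $c_{t}:=\tfrac{1}{2}\log S_{t}$ is a supersolution, that is $\Delta_{g}c_{t}\leq e^{2c_{t}}-e^{-2c_{t}}\|tq_{0}\|_{g}^{2}+K_{g}$ at every point. The left-hand side is $0$, and using $e^{2c_{t}}=S_{t}$, $e^{-2c_{t}}=S_{t}^{-1}$, the bound $\|tq_{0}\|_{g}^{2}=t^{2}\|q_{0}\|_{g}^{2}\leq t^{2}$ built into the definition of $g$, and $K_{g}\geq K$, the right-hand side is
\[
	\geq S_{t}-\frac{t^{2}}{S_{t}}+K=\frac{S_{t}^{2}+KS_{t}-t^{2}}{S_{t}}=\frac{r_{t}(S_{t})}{S_{t}}=0 \ ,
\]
so the inequality holds. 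Then one concludes by comparison: let $p\in S$ be a point where $w_{t}$ attains its maximum, so $\Delta_{g}w_{t}(p)\leq 0$; writing $F(x,s):=e^{2s}-e^{-2s}\|tq_{0}\|_{g}^{2}(x)+K_{g}(x)$, the equation gives $F(p,w_{t}(p))=\Delta_{g}w_{t}(p)\leq 0\leq F(p,c_{t})$, the last inequality being precisely the supersolution property. Since $\partial_{s}F=2e^{2s}+2e^{-2s}\|tq_{0}\|_{g}^{2}>0$, the function $F(p,\cdot)$ is strictly increasing, hence $w_{t}(p)\leq c_{t}$, and therefore $w_{t}\leq c_{t}$ on all of $S$. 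This yields $e^{2w_{t}}\leq e^{2c_{t}}=S_{t}$, which is the asserted bound.

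I do not anticipate a genuine obstacle here: the argument is a routine maximum-principle comparison, simpler than Proposition~\ref{prop:subsolution} because the competitor is a bona fide smooth (constant) function, with no singular behaviour to control near the zeros of $q_{0}$. The only points requiring attention are the sign bookkeeping — in particular the observation that choosing $S_{t}$ to be the positive root of $r_{t}$ makes the supersolution inequality an equality in the worst case (where $\|q_{0}\|_{g}^{2}=1$ and $K_{g}=K$ simultaneously) — together with the standing facts that $g$ is smooth, $\|q_{0}\|_{g}^{2}\leq 1$ everywhere, and $g=|q_{0}|$ off $U$, all of which were arranged when $g$ was introduced.
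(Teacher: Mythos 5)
Your argument is correct and is essentially the paper's own proof: both evaluate the equation for $w_{t}$ at a point where $w_{t}$ attains its maximum, use $\|q_{0}\|_{g}^{2}\leq 1$ and $K_{g}\geq K$ to reduce to the sign of $r_{t}(e^{2w_{t}(p)})$, and conclude from the fact that $S_{t}$ is the positive root; your repackaging of this as a comparison with the constant supersolution $\tfrac{1}{2}\log S_{t}$ is only a cosmetic difference. You also correctly read the conclusion as $e^{2w_{t}}\leq S_{t}$ (the statement's $e^{2u_{t}}$ is a typo), which is what the subsequent corollary actually uses.
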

\begin{proof}By compactness of $S$, the function $w_{t}$ has maximum at some point $p \in S$. At that point, we have
\begin{align*}
	0\geq \Delta_{g}w_{t}(p)&=e^{2w_{t}(p)}-t^{2}e^{-2w_{t}(p)}\|q_{1}(p)\|_{g}^{2}+K_{g}(p)\\
		&=e^{-2w_{t}(p)}(e^{4w_{t}(p)}+e^{2w_{t}(p)}K_{g}(p)-t^{2}\|q_{1}(p)\|_{g}^{2})\\
		&\geq e^{-2w_{t}(p)}(e^{4w_{t}(p)}+Ke^{2w_{t}(p)}-t^{2})=e^{-2w_{t}(p)}r_{t}(e^{2w_{t}(p)})
\end{align*}
The biggest possible value in which this inequality is true is for $e^{2w_{t}(p)}=S_{t}$. Since $p$ is a point of maximum of $w_{t}$
we deduce that $e^{2w_{t}}\leq S_{t}$ everywhere on $S$.
\end{proof}

\begin{cor}Along a ray $tq_{1}$, the induced metric $I_{t}$ on the maximal surface satisfies
\[
	I_{t}=t|q_{1}|(1+o(1)) \ \ \ \ \  \ \text{for} \ t \to +\infty
\]
outside the zeros of $q_{1}$.
\end{cor}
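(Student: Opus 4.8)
The plan is to combine the lower bound from Proposition~\ref{prop:subsolution} with the upper bound from Proposition~\ref{prop:supersol} to pinch the conformal factor $u_t$ (equivalently $w_t$) between two quantities that agree to leading order as $t \to +\infty$, away from the zeros of $q_0$. First I would recall that, outside a fixed neighbourhood $U$ of the zeros of $q_0$, we have $g = |q_0|$, so that $e^{2w_t} g = I_t$ reads $e^{2w_t}|q_0| = I_t$, and hence on $S \setminus U$ the ratio $I_t / (t|q_0|)$ equals $e^{2w_t}/t$. Proposition~\ref{prop:subsolution} gives the lower bound $I_t > t|q_0|$, i.e. $e^{2w_t}/t > 1$ on $S \setminus U$. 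For the upper bound, Proposition~\ref{prop:supersol} (with $e^{2w_t} \le S_t$, the positive root of $r_t(x) = x^2 + Kx - t^2$) gives $e^{2w_t}/t \le S_t/t$ on all of $S$, in particular on $S \setminus U$.

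The key computation is then the asymptotics of $S_t/t$. Solving $S_t = \tfrac{1}{2}\left(-K + \sqrt{K^2 + 4t^2}\right)$, one divides by $t$ to get
\[
	\frac{S_t}{t} = \frac{-K + \sqrt{K^2 + 4t^2}}{2t} = \frac{-K}{2t} + \sqrt{\frac{K^2}{4t^2} + 1} \xrightarrow{t \to +\infty} 1 \ .
\]
Therefore, on $S \setminus U$,
\[
	1 < \frac{I_t}{t|q_0|} = \frac{e^{2w_t}}{t} \le \frac{S_t}{t} = 1 + o(1) \ ,
\]
which forces $I_t / (t|q_0|) = 1 + o(1)$, uniformly on $S \setminus U$; since $U$ was an arbitrary neighbourhood of the (finitely many) zeros of $q_0$, this proves $I_t = t|q_0|(1 + o(1))$ outside the zeros of $q_0$, as claimed.

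I do not expect a serious obstacle here: the statement is essentially a formal corollary, and the only mild subtlety is bookkeeping about the neighbourhood $U$ — namely that the metric $g$ and hence the constant $K = \min K_g$ a priori depend on the choice of $U$, but since the conclusion is a pointwise (or locally uniform) statement on the fixed open set $S \setminus \{\text{zeros of } q_0\}$, one simply fixes, for each compact subset avoiding the zeros, an admissible $U$ small enough and applies the sandwich. One should also note that the $o(1)$ in the upper bound is in fact uniform over $S$, so no delicate localisation is needed; the restriction "outside the zeros of $q_0$" enters only because on $U$ the identity $e^{2w_t}|q_0| = I_t$ fails (there $g \ne |q_0|$), and indeed near the zeros $|q_0|$ degenerates so $I_t/(t|q_0|)$ need not stay bounded.
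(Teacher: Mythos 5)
Your proposal is correct and follows essentially the same route as the paper: sandwiching $I_t$ between $t|q_0|$ (Proposition \ref{prop:subsolution}) and $S_t\,g$ (Proposition \ref{prop:supersol}), noting $g=|q_0|$ away from the zeros, and checking $S_t/t\to 1$ from the quadratic. Your extra remark about the dependence of $U$ and $K$ on the chosen neighbourhood is a reasonable bit of bookkeeping but does not change the argument.
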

\begin{proof}Combining Proposition \ref{prop:subsolution} and Proposition \ref{prop:supersol} we have
\[
	t|q_{1}| \leq I_{t} \leq S_{t}g \ .
\]
Now, we notice that $\frac{S_{t}}{t}$ is the biggest positive root of the polynomial $\tilde{r}_{t}(x)=x^{2}+\frac{K}{t}x-1$, hence 
\[
	\frac{S_{t}}{t} \to 1 \ \ \ \ \ \ \ \text{when} \ t \to +\infty \ .
\]
Moreover, outside of $U$, by definition $g=|q_{1}|$, thus
\[
	|q_{1}| \leq \frac{I_{t}}{t} \leq \frac{S_{t}}{t}|q_{1}| \xrightarrow{t \to +\infty} |q_{1}|
\]
and the proof is complete.
\end{proof}

We can actually be more precise about the way the induced metrics $\frac{I_{t}}{t}$ converge to the flat metric $|q_{1}|$.

\begin{lemma}\label{lm:derivative}Let $u_{t}$ be the solution to Equation (\ref{eq:PDE}) along the ray $tq_{1}$. Then, for every $t_{0}\geq 0$, we have $\dot{u}_{t_{0}} \geq 0$
everywhere on $S$. 
\end{lemma}
\begin{proof}Along the ray $tq_{1}$, Equation (\ref{eq:PDE}) can be re-written as
\begin{equation}\label{eq:PDEray}
	\Delta_{h}u_{t}=e^{2u_{t}}-e^{-2u_{t}}t^{2}\|q_{1}\|^{2}_{h}-1 \ .
\end{equation}
Taking the derivative at $t=t_{0}$ we obtain
\begin{equation}\label{eq:derivativePDE}
	\Delta_{h}\dot{u}_{t_{0}}=2e^{2u_{t_{0}}}\dot{u}_{t_{0}}-2t_{0}\|q_{1}\|_{h}^{2}e^{-2u_{t_{0}}}+2t_{0}^{2}\dot{u}_{t_{0}}e^{-2u_{t_{0}}}\|q_{1}\|^{2}_{h} \ .
\end{equation}
At a point $p$ of minimum for $\dot{u}_{t_{0}}$ we have
\[
	0\leq \Delta_{h}\dot{u}_{t_{0}}(p)=2\dot{u}_{t_{0}}(p)(e^{2u_{t_{0}}(p)}+e^{-2u_{t_{0}}(p)}t_{0}^{2}\|q_{1}(p)\|^{2}_{h})-2t_{0}\|q_{1}(p)\|^{2}_{h}e^{-2u_{t_{0}}(p)} 
\]
which implies, since $t_{0}\geq 0$, that $\dot{u}_{t_{0}}(p)\geq 0$. Hence, $\dot{u}_{t_{0}}\geq 0$ everywhere on $S$. 
\end{proof}

\begin{prop}\label{prop:monotonia} Outside the zeros of $q_{0}$, 
\[
	\frac{I_{t}}{t} \to |q_{1}| \ \ \ \text{when} \ \ \ t \to +\infty
\]
monotonically from above.
\end{prop}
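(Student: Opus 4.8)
The plan is to show that the function $t \mapsto I_t/t$ is decreasing in $t$ at every point outside the zeros of $q_0$, which, combined with the already-established convergence $I_t/t \to |q_0|$, gives convergence from above monotonically. Equivalently, setting $v_t := 2u_t - \log t = \log(I_t/(t h))$, it suffices to show that $v_t$ is (weakly) decreasing in $t$ pointwise. First I would rewrite Equation (\ref{eq:PDE}) for $q = tq_0$ in terms of $v_t$: since $\Delta_h u_t = e^{2u_t} - e^{-2u_t} t^2 \|q_0\|_h^2 + K_h$, substituting $e^{2u_t} = t e^{v_t}$ yields
\[
	\tfrac{1}{2}\Delta_h v_t = t e^{v_t} - e^{-v_t}\|q_0\|_h^2 + K_h \ .
\]
So $v_t$ solves a PDE in which $t$ appears as an explicit parameter, multiplying the (positive) exponential term $e^{v_t}$.

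Next I would differentiate this equation with respect to $t$. Writing $\dot v_t = \partial_t v_t$ (smoothness in $t$ follows from the implicit function theorem applied to the uniformly elliptic operator, using uniqueness in Proposition \ref{prop:existence-sol}), differentiation gives
\[
	\tfrac{1}{2}\Delta_h \dot v_t = e^{v_t} + t e^{v_t}\dot v_t + e^{-v_t}\|q_0\|_h^2 \, \dot v_t \ ,
\]
that is,
\[
	\tfrac{1}{2}\Delta_h \dot v_t - \bigl(t e^{v_t} + e^{-v_t}\|q_0\|_h^2\bigr)\dot v_t = e^{v_t} > 0 \ .
\]
This is a linear elliptic equation for $\dot v_t$ with a strictly positive zeroth-order coefficient (with the sign that makes the maximum principle applicable) and a strictly positive right-hand side. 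By the maximum principle on the closed surface $S$: if $\dot v_t$ attained a nonnegative maximum at a point $p$, then at $p$ we would have $\Delta_h \dot v_t(p) \le 0$, hence the left-hand side would be $\le 0$ at $p$ while the right-hand side is $> 0$, a contradiction. Therefore $\dot v_t < 0$ everywhere on $S$, so $v_t$ is strictly decreasing in $t$, hence $I_t/t = e^{v_t} h$ decreases; combined with the previous corollary, the limit $|q_0|$ is approached from above.

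The main obstacle is the justification of differentiability of $u_t$ (equivalently $v_t$) in the parameter $t$, together with the interchange of $\partial_t$ and $\Delta_h$. I would handle this via the implicit function theorem in Hölder (or Sobolev) spaces: the map $(u,t) \mapsto \Delta_h u - t e^{2u} + t^2 e^{-2u}\|q_0\|_h^2 - K_h$ is smooth from $C^{2,\alpha}(S) \times \R_{>0}$ to $C^{0,\alpha}(S)$, and its differential in $u$ at a solution is $w \mapsto \Delta_h w - (2t e^{2u_t} + 2t^2 e^{-2u_t}\|q_0\|_h^2) w$, which is an isomorphism since the zeroth-order coefficient has the favorable sign (this is exactly the linearised operator controlling uniqueness in Proposition \ref{prop:existence-sol}). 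Hence $t \mapsto u_t$ is a smooth curve in $C^{2,\alpha}$, $\dot v_t$ is well-defined and solves the displayed linear equation, and elliptic regularity upgrades it to be smooth. One minor subtlety worth a remark: $\|q_0\|_h$ vanishes at the zeros of $q_0$, but this causes no trouble since the analysis above takes place on all of $S$ with $\|q_0\|_h$ merely a smooth nonnegative function, and the conclusion $\dot v_t < 0$ then holds everywhere, in particular outside the zeros where $I_t/t$ is compared to $|q_0|$.
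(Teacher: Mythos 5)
Your overall strategy --- differentiate the PDE in $t$ and apply the maximum principle to the $t$-derivative --- is exactly the paper's, and your normalisation $v_t = 2u_t - \log t$ is in fact cleaner than the paper's $\varphi_t = u_t - \tfrac{1}{2}\log(t\|q_0\|_h)$, because $v_t$ is smooth on all of $S$ and you avoid the paper's separate argument ruling out a maximum of $\dot{\varphi}_{t_0}$ at a zero of $q_0$. However, there is a computational error that lands exactly on the crux of the argument. Substituting $e^{2u_t} = te^{v_t}$, hence $e^{-2u_t} = t^{-1}e^{-v_t}$, into $\Delta_h u_t = e^{2u_t} - t^2 e^{-2u_t}\|q_0\|_h^2 + K_h$ gives
\[
  \tfrac{1}{2}\Delta_h v_t = t\bigl(e^{v_t} - e^{-v_t}\|q_0\|_h^2\bigr) + K_h \ ,
\]
not $te^{v_t} - e^{-v_t}\|q_0\|_h^2 + K_h$: you dropped a factor of $t$ on the second term. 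Differentiating the correct equation in $t$ yields
\[
  \tfrac{1}{2}\Delta_h\dot v_t - t\bigl(e^{v_t} + e^{-v_t}\|q_0\|_h^2\bigr)\dot v_t = e^{v_t} - e^{-v_t}\|q_0\|_h^2 \ ,
\]
so the right-hand side is \emph{not} the manifestly positive $e^{v_t}$; its sign is precisely the content of Proposition \ref{prop:subsolution}, which gives $e^{2u_t} > t\|q_0\|_h$, i.e.\ $e^{v_t} > \|q_0\|_h \geq e^{-v_t}\|q_0\|_h^2$ outside the zeros of $q_0$, while at the zeros the right-hand side equals $e^{v_t}>0$. Once you invoke that lower bound, your maximum-principle step goes through verbatim and the proof is complete. (Indeed, outside the zeros of $q_0$ one has $v_t = 2\varphi_t + \log\|q_0\|_h$, and the corrected linearised equation is literally the paper's equation $\Delta_h\dot{\varphi}_{t_0} = 2\|q_0\|_h\sinh(2\varphi_{t_0}) + 4t_0\|q_0\|_h\cosh(2\varphi_{t_0})\dot{\varphi}_{t_0}$ in disguise.) The same slip propagates into your implicit-function-theorem map, which should read $(u,t)\mapsto \Delta_h u - e^{2u} + t^2 e^{-2u}\|q_0\|_h^2 - K_h$; this does not affect the invertibility of the linearisation, so the differentiability claim stands.
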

\begin{proof}Recall that we can write $I_{t}=e^{2u_{t}}h$, where $u_{t}$ is the solution of Equation (\ref{eq:PDE}) for $q=tq_{1}$. By Proposition \ref{prop:subsolution}, we know that 
\[
	u_{t}>\frac{1}{2}\log(t\|q_{1}\|_{h}) \ .
\]
It is thus sufficient to show that $\varphi_{t}=u_{t}-\frac{1}{2}\log(t\|q_{1}\|_{h})>0$ is monotone decreasing in $t$.
Outside the zeros of $q_{1}$, the function $\varphi_{t}$ satisfies the differential equation
\begin{align*}
	\Delta_{h}\varphi_{t}&=\Delta_{h}u_{t}-\frac{1}{2}\Delta_{h}\log(t\|q_{1}\|_{h})
		=e^{2u_{t}}-t^{2}\|q_{1}\|_{h}^{2}e^{-2u_{t}}\\
		&=t\|q_{1}\|_{h}(e^{2u_{t}}t^{-1}\|q_{1}\|_{h}^{-1}-t\|q_{1}\|_{h}e^{-2u_{t}})\\
		&=t\|q_{1}\|_{h}(e^{2\varphi_{t}}-e^{-2\varphi_{t}})
		=2t\|q_{1}\|_{h}\sinh(2\varphi_{t}) \ .
\end{align*}
Taking the derivative at $t=t_{0}$, we obtain
\begin{equation}\label{eq:dot}
	\Delta_{h}\dot{\varphi}_{t_{0}}=2\|q_{1}\|_{h}\sinh(2\varphi_{t_{0}})+4t_{0}\|q_{1}\|_{h}\cosh(2\varphi_{t_{0}})\dot{\varphi}_{t_{0}} \ .
\end{equation}
\indent We would like to evaluate Equation (\ref{eq:dot}) at a point of maximum, but up to now the function $\dot{\varphi}_{t}$ is defined only on the complement of the zeros of $q_{1}$, and may be unbounded. However, since $e^{2u_{t}}e^{-2\varphi_{t}}=t\|q_{1}\|_{h}$, taking the derivative in $t=t_{0}$ we deduce that
\[
	2\|q_{1}\|_{h}t_{0}(\dot{u}_{t_{0}}-\dot{\varphi}_{t_{0}})=\|q_{1}\|_{h} \ ,
\]
hence, outside the zeros of $q_{1}$, we have
\[
	\dot{\varphi}_{t_{0}}=\dot{u}_{t_{0}}-\frac{1}{2t_{0}} \ ,
\]
which implies that $\dot{\varphi}_{t_{0}}$ extends to a smooth function at the zeros of $q_{1}$ because $\dot{u}_{t_{0}}$ does and, moreover, they share the same points of maximum and minimum. \\
\indent In particular, we can show that $\dot{\varphi}_{t_{0}}$ does not assume maximum at a point $p$ which is a zero of $q_{1}$. Otherwise, this would be also a point of maximum for $\dot{u}_{t_{0}}$ and we would have 
\begin{align*}
	0\geq \Delta_{h}\dot{u}_{t_{0}}(p)&=2e^{2u_{t_{0}}(p)}\dot{u}_{t_{0}}(p)-2t_{0}\|q_{0}(p)\|_{h}^{2}e^{-2u_{t_{0}}(p)}+2t_{0}^{2}\dot{u}_{t_{0}(p)}e^{-2u_{t_{0}}(p)}\|q_{0}(p)\|_{h}^{2}\\
		&=2e^{2u_{t_{0}}(p)}\dot{u}_{t_{0}}(p)
\end{align*}
which would imply that $\dot{u}_{t_{0}}\leq 0$, and, together with Lemma \ref{lm:derivative}, this would imply that $\dot{u}_{t_{0}}$ should vanish identically. But then
\[
	0=\Delta_{h}\dot{u}_{t_{0}}=-2t_{0}\|q_{1}\|_{h}^{2}e^{-2u_{t_{0}}}
\]
would give a contradiction. \\
\indent Therefore, $\dot{\varphi}_{t_{0}}$ takes maximum at a point $p$ outside the zeros of $q_{1}$, and we have
\begin{align*}
	0\geq \Delta_{h}\dot{\varphi}_{t_{0}}(p)&=2\|q_{1}(p)\|_{h}\sinh(2\varphi_{t_{0}}(p))+4t_{0}\|q_{1}(p)\|_{h}\cosh(2\varphi_{t_{0}}(p))\dot{\varphi}_{t_{0}}(p)\\
		&>4t_{0}\|q_{1}(p)\|_{h}\cosh(2\varphi_{t_{0}}(p))\dot{\varphi}_{t_{0}}(p)>4t_{0}\|q_{1}(p)\|_{h}\dot{\varphi}_{t_{0}}(p) \ ,
\end{align*}
which implies that $\dot{\varphi}_{t_{0}}<0$ everywhere on $S$, and $\varphi_{t}$ is monotone decreasing in $t$ as desired.
\end{proof}

\begin{cor}\label{cor:monotonia}Let $\lambda_{t}$ be the positive principal curvature of the maximal surface $\Sigma_{t}$. Then $\lambda_{t} \to 1$ monotonically outside the zeros of $q_{1}$, when $t$ goes to $+\infty$
\end{cor}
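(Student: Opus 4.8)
The plan is to translate the statement about the principal curvature $\lambda_t$ directly into the statement about the conformal factor $u_t$ already established in Proposition \ref{prop:monotonia}. Recall from the proof of Corollary \ref{cor:bound_lambda} that the shape operator satisfies $B_t = e^{-2u_t} h^{-1}\Re(tq_0)$, and since $B_t$ is traceless with $\det(B_t) = -e^{-4u_t} t^2 \|q_0\|_h^2$, its eigenvalues are $\pm \lambda_t$ with
\[
	\lambda_t = e^{-2u_t}\, t\, \|q_0\|_h = t\,\|q_0\|_h \cdot e^{-2u_t} .
\]
Writing $\varphi_t = u_t - \tfrac12\log(t\|q_0\|_h)$ as in Proposition \ref{prop:monotonia}, we get $e^{-2u_t} = (t\|q_0\|_h)^{-1} e^{-2\varphi_t}$, hence $\lambda_t = e^{-2\varphi_t}$, valid at every point outside the zeros of $q_0$.

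First I would record this identity $\lambda_t = e^{-2\varphi_t}$. Then convergence and monotonicity follow immediately: by Proposition \ref{prop:monotonia} the function $\varphi_t$ is positive and monotonically decreasing in $t$ outside the zeros of $q_0$, and by the corollary preceding Proposition \ref{prop:monotonia} (or directly from the sub/supersolution bounds) we have $I_t/t \to |q_0|$, equivalently $e^{2u_t}/(t\|q_0\|_h) \to 1$, i.e. $\varphi_t \to 0$. Since $x \mapsto e^{-2x}$ is continuous, positive, and strictly decreasing, $\lambda_t = e^{-2\varphi_t}$ increases monotonically and converges to $e^0 = 1$ as $t \to +\infty$, pointwise outside the zeros of $q_0$. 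This is exactly the assertion.

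There is essentially no obstacle here: all the analytic work — the subsolution estimate $u_t > \tfrac12\log(t\|q_0\|_h)$, the supersolution bound giving $\varphi_t \to 0$, and the monotonicity $\dot\varphi_{t_0} < 0$ — has already been carried out in Proposition \ref{prop:subsolution}, Proposition \ref{prop:supersol}, its corollary, and Proposition \ref{prop:monotonia}. The only point requiring a word of care is that the identity $\lambda_t = e^{-2\varphi_t}$ and the whole statement are only meaningful away from the zero set of $q_0$ (where $\|q_0\|_h = 0$ and the flat metric $|q_0|$ degenerates), which is why the corollary is phrased with that restriction; one simply works on the open surface $S \setminus q_0^{-1}(0)$ throughout.
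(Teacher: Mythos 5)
Your proposal is correct and follows essentially the same route as the paper: both reduce the claim to the identity $\lambda_{t}^{2}=-\det(B_{t})=e^{-4u_{t}}t^{2}\|q_{0}\|_{h}^{2}$ (your $\lambda_{t}=e^{-2\varphi_{t}}$ is just a rewriting of this) and then invoke Proposition \ref{prop:monotonia} for the monotone convergence $e^{2u_{t}}/(t\|q_{0}\|_{h})\to 1$ from above. No gaps.
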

\begin{proof}Recall that the shape operator of $\Sigma_{t}$ can be written as
\[
	B_{t}=I_{t}^{-1}II_{t}=e^{-2u_{t}}h^{-1}\Re(tq_{1}) \ .
\]
Therefore, $\lambda_{t}^{2}=-\det(B_{t})=e^{-4u_{t}}t^{2}\|q_{1}\|_{h}^{2}$ and this is monotonically increasing to $1$ by the previous proposition.
\end{proof}

\subsection{Asymptotics and rigidity of the Lorentzian Hausdorff dimension}
We now compare the Lorentzian Hausdorff dimension of the limit set of a GHMC anti-de Sitter manifold with the critical exponent of the 
unique maximal surface.

\begin{lemma}[\cite{GM_Hdim}]\label{lm:comparison}Let $\rho$ be the holonomy representation of a GHMC anti-de Sitter manifold $M$ with limit set $\Lambda_{\rho}$. Then
\[
	\LHdim(\Lambda_{\rho}) \leq \delta(\rho) \ .
\]
\end{lemma}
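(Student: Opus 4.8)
The plan is to compare the two counting functions defining $\LHdim(\Lambda_\rho)=E(d_{AdS})$ (via Theorem \ref{thm:equivalence}) and $E(\rho)$, the volume entropy of the induced metric $I$ on the maximal surface $\Sigma\subset M$. The natural base point for the quasi-distance is a lift $\tilde x_0\in\mathcal{C}(\Lambda_\rho)$ of a point on the maximal surface; since $\Sigma$ is a Cauchy surface, its convex hull (more precisely $\Sigma$ itself, being space-like and contained in the convex core) lies inside $\mathcal{C}(\Lambda_\rho)$. So I would take $x_0\in\tilde\Sigma$ and compare $d_{AdS}(\rho(\gamma)x_0,x_0)$ with $d_I(\gamma\cdot x_0,x_0)$, the distance measured in the induced Riemannian metric on $\tilde\Sigma$.

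The key geometric input is that for any two points $x,y$ on the maximal surface $\tilde\Sigma$, if the ambient geodesic $\gamma_{x,y}$ of $AdS_3$ joining them is space-like, then its $AdS_3$-length is at most the length of \emph{any} path from $x$ to $y$ inside $\tilde\Sigma$, in particular at most $d_I(x,y)$; this is because a space-like geodesic of $AdS_3$ locally maximises Lorentzian length (equivalently, the induced metric on a space-like surface is "smaller" than the chord in the sense that the surface distance dominates the length of the maximising space-like geodesic chord — the reverse triangle inequality in Lorentzian geometry, cf. \cite[Section 3]{GM_Hdim}). If $\gamma_{x,y}$ is not space-like, $d_{AdS}(x,y)=0\le d_I(x,y)$ trivially. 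Hence in all cases
\[
	d_{AdS}(\rho(\gamma)x_0,x_0)\le d_I(\gamma\cdot x_0,x_0)\qquad\text{for all }\gamma\in\pi_1(S).
\]
This inclusion of "balls" $\{\gamma: d_I(\gamma x_0,x_0)\le R\}\subseteq\{\gamma: d_{AdS}(\rho(\gamma)x_0,x_0)\le R\}$ goes the wrong way for a direct comparison of $\limsup$'s, so instead I would compare the counting functions directly: every $\gamma$ counted for $d_{AdS}$ at radius $R$ satisfies $d_{AdS}(\rho(\gamma)x_0,x_0)\le R$, which does \emph{not} bound $d_I(\gamma x_0,x_0)$. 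So the argument must run the other way.

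The correct route is: I would show $E(d_{AdS})\le E(\rho)$ by bounding, for each $\gamma$, the quasi-distance $d_{AdS}(\rho(\gamma)x_0,x_0)$ \emph{from below is not needed}; rather I count. Let me restate: the set $N_{AdS}(R)=\{\gamma: d_{AdS}(\rho(\gamma)x_0,x_0)\le R\}$ contains $N_I(R)=\{\gamma: d_I(\gamma x_0,x_0)\le R\}$ by the displayed inequality, so $\#N_{AdS}(R)\ge\#N_I(R)$, which gives $E(d_{AdS})\ge E(\rho)$ — the opposite of what we want. Therefore the genuine content must be a lower bound $d_{AdS}(\rho(\gamma)x_0,x_0)\ge c\,d_I(\gamma x_0,x_0)-C$ for constants $c>0$, $C$ depending only on $\rho$, at least for those $\gamma$ for which $\gamma_{x_0,\rho(\gamma)x_0}$ is space-like; combined with the observation that the $\gamma$ with non-space-like $\gamma_{x_0,\rho(\gamma)x_0}$ contribute $0$ to $d_{AdS}$ and must be controlled separately (one shows their number grows at most like $\#N_I$ as well, or that they are a "small" subset). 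Concretely, I expect the estimate $d_I(\gamma x_0,x_0)\le d_{AdS}(\rho(\gamma)x_0,x_0)+k_\rho'$ to hold because the maximal surface is uniformly space-like (its principal curvatures are bounded, $\lambda_t<1$ by Corollary \ref{cor:bound_lambda}), so the induced metric is uniformly quasi-isometric to the ambient space-like-geodesic quasi-distance restricted to $\tilde\Sigma$; the uniform bound on the second fundamental form is exactly what prevents $\tilde\Sigma$ from "curving up towards the light cone". Then $\#N_{AdS}(R)\le\#N_I(R+k_\rho')$, and taking $\limsup\frac1R\log$ yields $E(d_{AdS})\le E(\rho)$, i.e. $\LHdim(\Lambda_\rho)\le E(\rho)$ via Theorem \ref{thm:equivalence}.

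\textbf{Main obstacle.} The crux is the uniform comparison $d_I\le d_{AdS}+k_\rho'$ on $\tilde\Sigma$: one must show that for $x,y\in\tilde\Sigma$ far apart, the ambient geodesic $\gamma_{x,y}$ is space-like and its $AdS_3$-length is comparable to (in fact, I'd expect, bounds from below a fixed multiple of) the intrinsic distance $d_I(x,y)$. This requires controlling how the space-like surface $\tilde\Sigma$ sits inside $AdS_3$ — a convexity/comparison estimate of the type "a space-like graph with principal curvatures bounded away from $\pm 1$ is uniformly bi-Lipschitz to its projection and intrinsic distances are comparable to ambient space-like distances". I would extract this either from the explicit asymptotic geometry in \cite{GM_Hdim} (their Theorem 3.4 already packages $d_{AdS}$ as a genuine quasi-distance comparable to intrinsic data of surfaces in $\mathcal C(\Lambda_\rho)$) or prove it directly using that $\Sigma$ lies at bounded "distance" from the boundary of the convex core and the convex core has space-like boundary at bounded Lorentzian distance $\le\pi/2$. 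Handling the $\gamma$'s with $d_{AdS}(\rho(\gamma)x_0,x_0)=0$ (non-space-like connecting geodesic) is a secondary subtlety, resolved by noting such pairs are at bounded intrinsic distance on $\tilde\Sigma$ or, more robustly, by using a slightly enlarged base point set and the quasi-triangle inequality with constant $k_\rho$ to absorb them.
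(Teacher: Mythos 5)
You have the key Lorentzian comparison backwards, and this single sign error derails the whole argument. For a space-like chord and a space-like curve joining the same endpoints \emph{inside the time-like plane $P$ spanned by the chord}, the geodesic \emph{maximises} length: writing the metric of $P$ as $dt^{2}-\cosh^{2}(t)\,ds^{2}$ with the chord parametrised by arc length as $s=0$, any space-like curve $(t,s(t))$ in $P$ from $x$ to $y$ has length $\int_{0}^{d_{AdS}(x,y)}\sqrt{1-\cosh^{2}(t)\,s'(t)^{2}}\,dt\leq d_{AdS}(x,y)$, because the time-like deviation \emph{subtracts} from the length. Applying this to the curve $P\cap\tilde\Sigma$ (whose ambient length equals its length in the induced metric $I$) gives
\[
	d_{I}(x,y)\ \leq\ d_{AdS}(x,y)\qquad\text{for all }x,y\in\tilde\Sigma\ ,
\]
the opposite of the inequality you assert. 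This is exactly the direction one wants: it yields $\{\gamma:\ d_{AdS}(\rho(\gamma)x_{0},x_{0})\leq R\}\subseteq\{\gamma:\ d_{I}(\gamma x_{0},x_{0})\leq R\}$, hence $E(d_{AdS})\leq E(\rho)$ and, by Theorem \ref{thm:equivalence}, $\LHdim(\Lambda_{\rho})\leq E(\rho)$. (Your own justification is self-contradictory: you say the chord's length is "at most" that of any surface path "because a space-like geodesic locally maximises Lorentzian length" --- the maximising property means the chord is the \emph{longest}, not the shortest.)

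Because of this, the entire second half of your proposal --- the coarse lower bound $d_{AdS}\geq c\,d_{I}-C$, the uniform quasi-isometry of $\tilde\Sigma$ with the ambient quasi-distance via the bound on principal curvatures, and the separate treatment of pairs joined by non-space-like geodesics --- is both unnecessary and, as you yourself concede in the "main obstacle" paragraph, not actually established. The non-space-like case does not arise anyway: $\Sigma$ is a Cauchy surface, so $\tilde\Sigma$ is acausal and any two of its points are joined by a space-like geodesic. As written, the proposal does not prove the lemma; the correct and essentially one-line argument is the pointwise inequality $d_{I}\leq d_{AdS}$ obtained from the length computation above.
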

\begin{proof}Let $\Sigma$ be the unique maximal surface embedded in $M$. We identify the universal cover of $M$ with the domain of dependence $\mathcal{D}(\Lambda_{\rho})$ of the limit set. In this way, $\Sigma$ is lifted to a minimal disc $\tilde{\Sigma}$ in $AdS_{3}$ with asymptotic boundary $\Lambda_{\rho}$, contained in the convex hull $\mathcal{C}(\Lambda_{\rho})$. We fix a base point $x_{0}\in \tilde{\Sigma}$. By definition,
\[
	\delta(\rho)=\limsup_{R \to +\infty}\frac{1}{R}\log(\#\{ \gamma \in \pi_{1}(S) \ | \ d_{I}(\rho(\gamma)x_{0}, x_{0})\leq R \}) \ ,  
\]
where $I$ is the induced metric on $\tilde{\Sigma}$, and by Theorem \ref{thm:equivalence}
\[
	\LHdim(\Lambda_{\rho})=\limsup_{R \to +\infty}\frac{1}{R}\log(\#\{\gamma \in \pi_{1}(S) \ | \ d_{AdS}(\rho(\gamma)x_{0}, x_{0}) \leq R\}) .
\]
Therefore, it is sufficient to show that for every pair of points $x,y \in \tilde{\Sigma}$, we have
\[
	d_{I}(x,y) \leq d_{AdS}(x,y) \ .
\]
Since $\Sigma$ is a Cauchy surface for $M$, the geodesic connecting $x$ and $y$ is space-like. We can thus find a Lorentzian plane $P\subset AdS_{3}$ containing $x$ and $y$. In an affine chart, this is isometric to $(\R \times (-\pi/2, \pi/2), dt^{2}-\cosh^{2}(t)ds^{2})$, where $t$ is the arc-length parameter of the space-like geodesic between $x$ and $y$. By intersecting $P$ with $\tilde{\Sigma}$ we obtain a curve $\gamma \subset \tilde{\Sigma}$ with length
\[
	\length(\gamma)=\int_{0}^{d_{AdS}(x,y)}\sqrt{1-\cosh^{2}(t)s'(t)}dt \leq d_{AdS}(x,y) \ .
\]
As a consequence, the distance between $x$ and $y$ in the induced metric of $\tilde{\Sigma}$ must be smaller than $d_{AdS}(x,y)$.
\end{proof}

\begin{teo}\label{thm:asy_entropy}Let $M_{t}$ be the sequence of GHMC anti-de Sitter manifolds parameterised by the ray $(h, tq_{1}) \in T^{*}\T(S)$ for some non-zero holomorphic quadratic differential $q_{1}$. Let $\Lambda_{t}$ be the limit sets of the corresponding holonomy representations. Then
\[
	\lim_{t \to +\infty}\LHdim(\Lambda_{t})=0
\]
\end{teo}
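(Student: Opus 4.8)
The plan is to combine the comparison inequality of Lemma~\ref{lm:comparison} with the asymptotic control on the induced metric obtained in the previous subsection. By Lemma~\ref{lm:comparison} we have $\LHdim(\Lambda_{t}) \leq E(\rho_{t}) = E(I_{t})$, where $I_{t}$ is the induced metric on the maximal surface $\Sigma_{t}$, so it suffices to prove that the volume entropy of $I_{t}$ tends to $0$ as $t \to +\infty$. Since $\LHdim \geq 0$ always, this will force the limit to be $0$.

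To estimate $E(I_{t})$ from above, I would use the upper bound $I_{t} \leq S_{t}\, g$ from Proposition~\ref{prop:supersol}, where $g$ is the fixed smooth metric in the conformal class of $h$ with $g = |q_{0}|$ outside a neighbourhood $U$ of the zeros. A larger metric has smaller volume entropy (distances increase, so fewer lattice points land in a ball of radius $R$), hence $E(I_{t}) \leq E(S_{t} g)$. Rescaling a metric by a constant $c$ scales distances by $\sqrt{c}$ and therefore divides the volume entropy by $\sqrt{c}$, so $E(S_{t} g) = E(g)/\sqrt{S_{t}}$. Since $S_{t}/t \to 1$ as $t \to +\infty$ (this was already observed: $S_{t}/t$ is the positive root of $x^{2} + (K/t)x - 1$), we get $S_{t} \to +\infty$, and therefore
\[
	0 \leq \LHdim(\Lambda_{t}) \leq E(I_{t}) \leq \frac{E(g)}{\sqrt{S_{t}}} \xrightarrow{\ t \to +\infty\ } 0 \ ,
\]
provided $E(g)$ is finite. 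This last point is the only delicate issue: $g$ is a smooth metric on the closed surface $S$ only away from $U$, but on $U$ it was chosen smooth as well (it interpolates between $|q_{0}|$ and the smooth flat-with-cone-points metric), so $g$ is an honest smooth Riemannian metric on the compact surface $S$, and its volume entropy is finite — indeed for any smooth metric on a closed negatively-or-not-curved surface the lattice-counting function grows at most exponentially, with rate controlled by, say, a lower curvature bound via Bishop--Gromov. Alternatively one can bound $E(g)$ by the entropy of any hyperbolic metric bi-Lipschitz to it.

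The main obstacle, then, is essentially bookkeeping: one must make sure the chain of inequalities $E(I_{t}) \leq E(S_t g) = E(g)/\sqrt{S_t}$ is legitimate, i.e. that the metric comparison $I_{t} \leq S_{t} g$ (pointwise as quadratic forms, which it is by Proposition~\ref{prop:supersol}) really does give the monotonicity $E(I_{t}) \leq E(S_{t} g)$ of volume entropy. This follows because $d_{I_t}(x,y) \geq d_{S_t g}(x,y)$ for all $x, y$ in the universal cover, hence the set $\{\gamma : d_{I_t}(\rho_t(\gamma)x_0, x_0) \leq R\}$ is contained in $\{\gamma : d_{S_t g}(\rho_t(\gamma)x_0, x_0) \leq R\}$, and the $\limsup$ defining the entropy is monotone in the counting function. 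No compactness or regularity subtlety arises beyond the finiteness of $E(g)$ discussed above, so this is the cleanest of the asymptotic results in the paper; it is a direct corollary of Lemma~\ref{lm:comparison} and Proposition~\ref{prop:supersol}.
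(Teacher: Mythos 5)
There is a genuine error in the direction of your entropy comparison. Your general principle is stated correctly (``a larger metric has smaller volume entropy'': if $g_{1}\leq g_{2}$ pointwise then $d_{g_{1}}\leq d_{g_{2}}$, so the counting function for $g_{2}$ is smaller and $E(g_{2})\leq E(g_{1})$), but you then apply it backwards. From Proposition \ref{prop:supersol} you have $I_{t}\leq S_{t}\,g$, i.e.\ $S_{t}\,g$ is the \emph{larger} metric; this gives $d_{I_{t}}\leq d_{S_{t}g}$ (not $\geq$, as you assert in your last paragraph) and hence $E(I_{t})\geq E(S_{t}g)=E(g)/\sqrt{S_{t}}$. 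That is a \emph{lower} bound on $E(I_{t})$ by a quantity tending to $0$, which is vacuous since entropy is nonnegative; it gives no control from above, and the chain $0\leq \LHdim(\Lambda_{t})\leq E(I_{t})\leq E(g)/\sqrt{S_{t}}$ collapses at the last step.

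The fix is to use the bound in the other direction, namely Proposition \ref{prop:subsolution}: $I_{t}> t|q_{0}|$. Since $I_{t}$ is then the larger metric, $E(I_{t})\leq E(t|q_{0}|)=E(|q_{0}|)/\sqrt{t}\to 0$, and combined with Lemma \ref{lm:comparison} this finishes the proof. This is exactly the paper's argument. Note that the comparison metric here is the flat metric with conical singularities $|q_{0}|$ rather than your smoothed metric $g$; its volume entropy is still finite (the definition of volume entropy in Section \ref{sec:entropy} explicitly covers flat metrics with cone points, and such a metric is bi-Lipschitz to any smooth metric on $S$), so the finiteness discussion you give for $E(g)$ transfers verbatim. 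Everything else in your write-up --- the reduction via Lemma \ref{lm:comparison}, the scaling law $E(c\,g)=E(g)/\sqrt{c}$, and the monotonicity of the $\limsup$ in the counting function --- is correct; only the choice of which side of the two-sided estimate on $I_{t}$ feeds into an \emph{upper} bound on the entropy needs to be reversed.
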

\begin{proof}By Lemma \ref{lm:comparison}, it is sufficient to show that the critical exponent relative to the induced metric on the maximal surface tends to $0$ when $t$ goes to $+\infty$. Since the metrics $I_{t}=e^{2u_{t}}h$ are bounded from below by the flat metrics with conical singularities $g_{t}=t|q_{1}|$ (Proposition \ref{prop:subsolution}), we deduce that
\[
	\delta(\rho_{t})\leq \delta(g_{t}) \ .
\]
The proof is then completed by noticing that $\delta(t|q_{1}|)=t^{-1}\delta(|q_{1}|)$. 
\end{proof}

\indent In order to prove a rigidity result for the critical exponent relative to the induced metric on the maximal surface and the Lorentzian Hausdorff dimension, we study the derivative of the critical exponent along a ray. To this aim, we need the following useful formula for its variation along a path of smooth Riemannian metrics:
\begin{teo}[\cite{KKW}]\label{thm:variation_entropy} Let $g_{t}$ be a smooth path of negatively curved Riemannian metrics on a closed manifold $S$. Then 
\[
	\frac{d}{dt}\delta(g_{t})_{|_{t=t_{0}}}=-\frac{\delta(g_{t_{0}})}{2}\int_{T^{1}S}\frac{d}{dt}g_{t}(v,v)_{|_{t=t_{0}}}d\mu_{t_{0}}
\]
where $\mu_{t_{0}}$ denotes the Bowen-Margulis measure on the unit tangent bundle $T^{1}S$ of $(S,g_{t_0})$.
\end{teo}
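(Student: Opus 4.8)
This is a theorem of Katok, Knieper and Weiss \cite{KKW}; what follows is an outline of their argument, the idea being to differentiate Bowen's pressure equation for time changes of Anosov flows. Since each $g_{t}$ is negatively curved, Manning's theorem identifies the volume entropy $E(g_{t})$ with the topological entropy $h_{\mathrm{top}}(\varphi^{g_{t}})$ of the geodesic flow $\varphi^{g_{t}}$ on the unit tangent bundle $T^{1}_{g_{t}}S$, and $\varphi^{g_{t}}$ is a contact Anosov flow, hence admits a unique measure of maximal entropy, the Bowen--Margulis probability measure. It therefore suffices to prove the variation formula for $t\mapsto h_{\mathrm{top}}(\varphi^{g_{t}})$, with $\mu_{t_{0}}$ equal to the Bowen--Margulis measure of $\varphi^{g_{t_{0}}}$.

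\emph{Orbit equivalence and Bowen's equation.} By structural stability of Anosov flows, in its parametrised form, one transports all the $\varphi^{g_{t}}$ to the fixed manifold $N:=T^{1}_{g_{t_{0}}}S$ through a $C^{1}$ family of orbit equivalences equal to the identity at $t_{0}$, obtaining flows $\psi_{t}$ on $N$ conjugate to $\varphi^{g_{t}}$, with $\psi_{t_{0}}=\varphi^{g_{t_{0}}}$ and $\psi_{t}$ a time change of $\psi_{t_{0}}$ with positive H\"older speed function $a_{t}$, where $a_{t_{0}}\equiv 1$ and $t\mapsto a_{t}$ is $C^{1}$ into the relevant H\"older space. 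Bowen's formula for the entropy of a reparametrised flow then characterises $h_{t}:=h_{\mathrm{top}}(\psi_{t})=h_{\mathrm{top}}(\varphi^{g_{t}})=E(g_{t})$ as the unique real number with
\[
	P_{\psi_{t_{0}}}\!\left(-\frac{h_{t}}{a_{t}}\right)=0 ,
\]
$P_{\psi_{t_{0}}}$ being the topological pressure of the reference flow.

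\emph{Differentiating the pressure equation.} The pressure functional is differentiable along $C^{1}$ families of H\"older potentials, with $\frac{d}{d\epsilon}P_{\psi_{t_{0}}}(f_{\epsilon})\big|_{0}=\int\dot f_{0}\,dm_{f_{0}}$, where $m_{f_{0}}$ is the equilibrium state of $f_{0}$ (standard thermodynamic formalism). At $t=t_{0}$ the potential above is the constant $-h_{t_{0}}$, whose equilibrium state is precisely the Bowen--Margulis probability measure $\mu_{t_{0}}$. Differentiating the equation at $t_{0}$, and using $a_{t_{0}}\equiv 1$ together with $\mu_{t_{0}}(N)=1$, one gets
\[
	0=\int\bigl(-\dot h_{t_{0}}+h_{t_{0}}\,\dot a_{t_{0}}\bigr)\,d\mu_{t_{0}}=-\dot h_{t_{0}}+h_{t_{0}}\int\dot a_{t_{0}}\,d\mu_{t_{0}} ,
\]
hence $\dot h_{t_{0}}=h_{t_{0}}\int\dot a_{t_{0}}\,d\mu_{t_{0}}$.

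\emph{Identifying the first variation of the speed.} Finally, $a_{t}$ records the infinitesimal ratio of $g_{t_{0}}$-arclength along the shadowing $g_{t_{0}}$-geodesics to $g_{t}$-arclength along the $g_{t}$-geodesics; writing $g_{t}=g_{t_{0}}+(t-t_{0})\dot g+O((t-t_{0})^{2})$, a $g_{t}$-unit vector $v$ has $g_{t_{0}}(v,v)=1-(t-t_{0})\dot g(v,v)+O((t-t_{0})^{2})$, so $\sqrt{g_{t_{0}}(v,v)}=1-\tfrac{t-t_{0}}{2}\dot g(v,v)+O((t-t_{0})^{2})$, and a local computation yields $\dot a_{t_{0}}(v)=-\tfrac12\,\frac{d}{dt}g_{t}(v,v)\big|_{t_{0}}$ up to a function cohomologous to zero for $\psi_{t_{0}}$, which does not affect the integral against the flow-invariant measure $\mu_{t_{0}}$. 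Combining the last two steps gives
\[
	\frac{d}{dt}E(g_{t})\Big|_{t=t_{0}}=\dot h_{t_{0}}=-\frac{E(g_{t_{0}})}{2}\int_{T^{1}S}\frac{d}{dt}g_{t}(v,v)\Big|_{t=t_{0}}\,d\mu_{t_{0}} .
\]
The technical heart of \cite{KKW}, and the step I expect to be the main obstacle, is precisely this last paragraph together with the regularity in the orbit-equivalence step: producing a parameter-smooth family of orbit equivalences whose speed functions are regular enough for the pressure derivative to apply, and checking that the displacement of the geodesics under the perturbation changes the speed function only by a coboundary.
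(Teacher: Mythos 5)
The paper does not prove this statement: it is imported verbatim from \cite{KKW} as a black box, so there is no internal proof to compare against. Your outline is a faithful and internally consistent reconstruction of the Katok--Knieper--Weiss argument (Manning's identification of volume and topological entropy, parametrised structural stability, Bowen's pressure equation $P(-h_t/a_t)=0$, differentiation at the constant potential whose equilibrium state is the Bowen--Margulis measure, and the identification of $\dot a_{t_0}$ with $-\tfrac12\frac{d}{dt}g_t(v,v)$ up to a coboundary), with the signs checking out, and you correctly flag the regularity of the orbit equivalences and the coboundary computation as the genuinely technical steps.
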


\begin{prop}\label{prop:entropy_decresce}The critical exponent relative to the induced metric on the maximal surface of a GHMC anti-de Sitter manifold is decreasing along a ray $tq_{1}$ for $t\geq 0$. 
\end{prop}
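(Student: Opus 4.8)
The plan is to differentiate the entropy along the ray and feed the computation into the variational formula of Theorem~\ref{thm:variation_entropy}. Write $I_t = e^{2u_t}h$, where $u_t$ is the solution of Equation~(\ref{eq:PDE}) for $q = tq_0$ provided by Proposition~\ref{prop:existence-sol}. Then $\frac{d}{dt}I_t = 2\dot u_t\, I_t$, so for a vector $v$ of unit $I_{t_0}$-length we get $\frac{d}{dt}I_t(v,v)|_{t_0} = 2\dot u_{t_0}(\pi(v))$, where $\pi : T^1 S \to S$ is the bundle projection. Substituting into Theorem~\ref{thm:variation_entropy} gives
\[
  \frac{d}{dt}E(I_t)\Big|_{t=t_0} = -\,E(I_{t_0})\int_{T^1 S}\dot u_{t_0}\circ\pi\ d\mu_{t_0} .
\]
Thus the whole statement reduces to showing that $\dot u_{t_0}\ge 0$ on $S$, with strict positivity somewhere when $t_0>0$, together with the fact that $\mu_{t_0}$ charges open sets.

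To make this rigorous I would first record that $t\mapsto u_t$ is a smooth family: the linearisation of Equation~(\ref{eq:PDE}) at $u_t$ is $\Delta_h - 2\big(e^{2u_t}+t^2e^{-2u_t}\|q_0\|_h^2\big)$, which is invertible because it is $\Delta_h$ minus a strictly positive function, so the implicit function theorem applies and $\dot u_{t_0}$ is a well-defined smooth function on all of $S$ (there is no problem at the zeros of $q_0$, since the background data $h$ and $\|q_0\|_h^2$ are smooth there). Differentiating Equation~(\ref{eq:PDE}) in $t$ at $t_0$ then yields the linear elliptic equation
\[
  \Delta_h\dot u_{t_0} - c_{t_0}\,\dot u_{t_0} = -2t_0\,e^{-2u_{t_0}}\|q_0\|_h^2 , \qquad c_{t_0} := 2\big(e^{2u_{t_0}}+t_0^2e^{-2u_{t_0}}\|q_0\|_h^2\big) > 0 .
\]
Evaluating at a minimum point $p$ of $\dot u_{t_0}$, where $\Delta_h\dot u_{t_0}(p)\ge 0$, forces $c_{t_0}(p)\,\dot u_{t_0}(p) \ge 2t_0\,e^{-2u_{t_0}(p)}\|q_0(p)\|_h^2 \ge 0$, hence $\dot u_{t_0}\ge 0$ everywhere on $S$. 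When $t_0>0$ the right-hand side of the displayed equation is a non-positive function that is not identically zero, so $\dot u_{t_0}\not\equiv 0$; being continuous and non-negative, it is then strictly positive on a non-empty open subset of $S$. When $t_0=0$ the same equation forces $\dot u_0\equiv 0$, so the derivative of the entropy simply vanishes there.

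To close the argument I would verify the hypotheses of Theorem~\ref{thm:variation_entropy}: by the Gauss equation and Corollary~\ref{cor:bound_lambda}, $K_{I_t} = -1-\det B_t = -1+\lambda_t^2 < 0$, so $(I_t)$ is a smooth path of negatively curved metrics, and $E(I_{t_0})>0$ since $\pi_1(S)$ grows exponentially. The remaining point — which I regard as the crucial one for \emph{strictness} — is that the measure $\mu_{t_0}$ appearing in the KKW formula has full support (it is, up to normalisation, the Bowen--Margulis measure of maximal entropy of the geodesic flow of $I_{t_0}$). Granting this, the integral $\int_{T^1S}\dot u_{t_0}\circ\pi\,d\mu_{t_0}$ is strictly positive for $t_0>0$ and vanishes for $t_0=0$, so $\frac{d}{dt}E(I_t)\le 0$ with strict inequality for every $t_0>0$; hence $t\mapsto E(\rho_t)=E(I_t)$ is strictly decreasing on $[0,+\infty)$. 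The only non-formal ingredients are the maximum-principle step giving the sign of $\dot u_{t_0}$ and the full-support property of $\mu_{t_0}$, the latter being the subtler of the two.
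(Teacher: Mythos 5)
Your proposal is correct and follows essentially the same route as the paper: differentiate Equation~(\ref{eq:PDE}) in $t$, apply the maximum principle at a minimum of $\dot u_{t_0}$ to get $\dot u_{t_0}\ge 0$, and feed this into the variational formula of Theorem~\ref{thm:variation_entropy}, with strictness coming from the observation that $\dot u_{t_0}\equiv 0$ forces $t_0=0$. Your two added justifications --- smoothness of $t\mapsto u_t$ via the implicit function theorem and the full-support property of the measure $\mu_{t_0}$ needed to pass from ``positive on an open set'' to ``positive integral'' --- are points the paper leaves implicit, and you are right that the latter is the one genuinely needed for strictness.
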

\begin{proof}
The induced metrics on the maximal surfaces are $I_{t}=e^{2u_{t}}h$, thus, using Lemma \ref{lm:derivative}, for every unit tangent vector $v \in T^{1}S$ 
\[
	\frac{d}{dt}I_{t}(v,v)_{|_{t=t_{0}}}=2\dot{u}_{t_{0}}e^{2u_{t_{0}}}h(v,v) \geq 0 \ .
\]
Since the induced metrics $I_{t}$ are negatively curved by the Gauss equation and Corollary \ref{cor:bound_lambda}, we can apply Theorem \ref{thm:variation_entropy} and deduce that the critical exponent is non increasing.\\
\indent To prove that it is decreasing, we notice that 
\[
	\frac{d}{dt}\delta(I_{t})_{|_{t=t_{0}}}=-\frac{\delta(I_{t_{0}})}{2}\int_{T^{1}S}\frac{d}{dt}I_{t}(v,v)_{|_{t=t_{0}}}d\mu_{t_{0}}=0
\]
if and only if $\dot{u}_{t_{0}}$ vanishes identically on $S$, because the Bowen-Margulis measure is positive in all non empty open sets. But in this case, Equation (\ref{eq:derivativePDE}) reduces to 
\[
	0=2t_{0}\|q_{1}\|_{h}^{2}e^{-2u_{t_{0}}}
\]
which implies that $t_{0}=0$, because $q_{1}$ is not identically zero. 
\end{proof}

Recall that we denoted by $\delta(h,q)$ the critical exponent relative to the induced metric on the maximal surface embedded in the GHMC anti-de Sitter manifold corresponding to $(h,q) \in T^{*}\T(S)$. 

\begin{cor}\label{cor:maximum}$\delta(h,q)\leq 1$ for every $(h,q) \in T^{*}\T(S)$ and $\delta(h,q)=1$ if and only if $q=0$.
\end{cor}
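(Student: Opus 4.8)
The plan is to reduce the statement to the monotonicity already established along rays of quadratic differentials. As noted in the discussion preceding the corollary, $E$ is invariant under the $S^{1}$-action $(h,q)\mapsto(h,e^{i\theta}q)$, so it suffices to understand the one-parameter functions $t\mapsto E(h,tq_{0})$ for $q_{0}$ a unitary holomorphic quadratic differential and $t\geq 0$. First I would compute the value at the origin of such a ray: when $q=0$, Equation~(\ref{eq:PDE}) reads $\Delta_{h}u=e^{2u}+K_{h}=e^{2u}-1$, using that $h$ is the hyperbolic metric representing the point of $\T(S)$, and its unique solution (Proposition~\ref{prop:existence-sol}) is $u\equiv 0$. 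Hence the induced metric on the maximal surface is $I=h$ itself, the maximal surface is totally geodesic, and $E(h,0)$ is the volume entropy of a closed hyperbolic surface, which equals $1$.

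Next I would invoke Proposition~\ref{prop:entropy_decresce}, which asserts that $t\mapsto E(h,tq_{0})$ is strictly decreasing on $[0,+\infty)$ whenever $q_{0}\neq 0$. Combined with the base case this gives $E(h,tq_{0})<E(h,0)=1$ for every $t>0$. Since every nonzero holomorphic quadratic differential $q$ can be written as $tq_{0}$ with $t>0$ and $q_{0}$ unitary, and since $E$ depends only on $\|q\|$, we conclude $E(h,q)<1$ for all $q\neq 0$. Together with $E(h,0)=1$ this yields simultaneously the bound $E(h,q)\leq 1$ for every $(h,q)\in T^{*}\T(S)$ and the rigidity statement $E(h,q)=1$ if and only if $q=0$.

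There is essentially no remaining obstacle here: all the analytic input — the a priori bound $\lambda_{t}<1$ of Corollary~\ref{cor:bound_lambda} ensuring that $I_{t}$ is negatively curved, the sign $\dot u_{t}\geq 0$, and the entropy variation formula of Theorem~\ref{thm:variation_entropy} — has already been carried out in Proposition~\ref{prop:entropy_decresce}. The only point worth double-checking is the identification $E(h,0)=1$, i.e.\ that the $q=0$ locus corresponds exactly to the Fuchsian manifolds whose maximal surface is the totally geodesic hyperbolic surface; this is immediate from the form of Equation~(\ref{eq:PDE}) with $q=0$ and $K_{h}=-1$.
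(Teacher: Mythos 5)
Your proof is correct and follows essentially the same route as the paper: compute $E(h,0)=1$ from the fact that $u\equiv 0$ solves Equation~(\ref{eq:PDE}) when $q=0$, then apply the strict monotonicity of $t\mapsto E(h,tq_{0})$ from Proposition~\ref{prop:entropy_decresce} to conclude $E(h,q)<1$ for $q\neq 0$. The extra remarks about the $S^{1}$-invariance and the negative curvature input are consistent with the paper and do not change the argument.
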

\begin{proof}If $q=0$, the function $u=0$ is the unique solution to Equation (\ref{eq:PDE}). Hence, the induced metric on the maximal surface is hyperbolic, and it is well-known that the critical exponent of the hyperbolic metric is $1$. \\ 
\indent On the other hand, since the function $\delta(h,tq_{1})$ is decreasing for $t\geq 0$, for every non-zero quadratic differential $q$ we have $\delta(h,q)<\delta(h, 0)=1$.
\end{proof}

We then obtain a new proof of the rigidity of the Lorentzian Hausdorff dimension (\cite{GM_Hdim}):
\begin{teo}Let $M$ be a GHMC anti-de Sitter manifold and let $\Lambda$ be its limit set. Then
\[
	\LHdim(\Lambda)=1
\]
if and only if $M$ is Fuchsian.
\end{teo}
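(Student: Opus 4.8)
The plan is to combine the rigidity part of Corollary \ref{cor:maximum} with the entropy comparison from Lemma \ref{lm:comparison}, together with the reverse inequality for Fuchsian manifolds. First, suppose $M$ is Fuchsian. Then its limit set $\Lambda$ is the boundary of the totally geodesic space-like plane $P_{0}$, which is a copy of $\h^{2}$, and the convex core is exactly this plane with its hyperbolic metric. On $P_{0}$ all the geodesics appearing in the definition of $d_{AdS}$ are space-like, so $d_{AdS}$ coincides with the hyperbolic distance and $E(d_{AdS})$ is the volume entropy of $\h^{2}$, which is $1$; by Theorem \ref{thm:equivalence} this gives $\LHdim(\Lambda)=1$. (Alternatively, one observes that in the Fuchsian case the maximal surface \emph{is} $P_{0}$, so $E(\rho)=1$, and then combines $\LHdim(\Lambda)\le E(\rho)=1$ from Lemma \ref{lm:comparison} with the obvious lower bound $\LHdim(\Lambda)\ge 1$ coming from the fact that $d_{AdS}$ restricted to $P_{0}$ is the hyperbolic distance.)

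Conversely, suppose $\LHdim(\Lambda)=1$. By Lemma \ref{lm:comparison} we have $1=\LHdim(\Lambda)\le E(\rho)$, where $E(\rho)$ is the volume entropy of the induced metric on the unique maximal surface in $M$. On the other hand, Corollary \ref{cor:maximum} gives $E(\rho)=E(h,q)\le 1$, so $E(\rho)=1$, and the equality case of that corollary forces $q=0$. But $q=0$ means the second fundamental form of the maximal surface vanishes, so the maximal surface is totally geodesic; by Proposition \ref{prop:rel_para} the Hopf differential of the harmonic map factorising the associated minimal Lagrangian map is zero, which means the harmonic maps are isometries and $h_{l}=h_{r}=h$, i.e.\ $\rho_{l}=\rho_{r}$ and $M$ is Fuchsian.

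The only delicate point is making sure the Fuchsian lower bound $\LHdim(\Lambda)\ge 1$ is clean: one needs that $d_{AdS}$ agrees with (or is comparable to) the hyperbolic metric on $P_{0}$, which is immediate here since every geodesic of $AdS_{3}$ with both endpoints in $\partial P_{0}$ lies in $P_{0}$ and is space-like with length equal to the $\h^{2}$-distance. With that in hand both implications are short, so I would present the argument essentially in the two paragraphs above, leaning on Corollary \ref{cor:maximum} for the substantive direction.
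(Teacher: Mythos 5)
Your proof is correct and follows essentially the same route as the paper: the forward direction identifies $d_{AdS}$ on the invariant totally geodesic plane $P_{0}$ with the hyperbolic distance to get entropy $1$, and the converse combines Lemma \ref{lm:comparison} with the rigidity statement of Corollary \ref{cor:maximum}. Your extra unwinding of why $q=0$ forces $\rho_{l}=\rho_{r}$ is a harmless elaboration of what the paper leaves implicit.
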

\begin{proof}If $M$ is Fuchsian, the holonomy representation $\rho=(\rho_{0}, \rho_{0})$ preserves the totally geodesic space-like plane $P_{0}$, that is isometric to the hyperbolic plane. Fix the base point $x_{0}$ on $P_{0}$. Since for every $\gamma \in \pi_{1}(S)$, the isometry $\rho(\gamma)$ acts on the plane $P_{0}$ like the hyperbolic isometry $\rho_{0}(\gamma)$ on $\h^{2}$ (see Section \ref{sec:holder}), the critical exponent relative to $d_{AdS}$ coincides with the critical exponent relative to the hyperbolic metric associated to $\rho_{0}$, which is equal to $1$.\\
\indent Viceversa, suppose that $\LHdim(\Lambda)=1$, then by Lemma \ref{lm:comparison} the critical exponent relative to the induced metric on the maximal surface embedded in $M$ is at least $1$. By Corollary \ref{cor:maximum}, we deduce that $M$ is Fuchsian. 
\end{proof}

\section{Width of the convex core}\label{sec:width}
Another geometric quantity associated to GHMC anti-de Sitter manifolds is the width of the convex core. This has already been extensively studied in \cite{seppimaximal}. Combining the aformentioned work with our estimates in Section \ref{sec:entropy}, we can describe its asymptotic behaviour.\\
\\
\indent We recall that the convex core of a GHMC anti-de Sitter manifold $M$ is homeomorphic to $S \times [0,1]$, unless $M$ is Fuchsian, in which case it is a totally geodesic surface. The width of the convex core expresses how far $M$ is from being Fuchsian, as it measures the distance between the two boundary components of the convex core. More precisely, let $\Lambda_{\rho}$ be the limit set of the holonomy representation $\rho$ of $M$. The convex core can be realised as the quotient of the convex hull of $\Lambda_{\rho}$ in $AdS_{3}$ by the action of $\rho(\pi_{1}(S))$. 

\begin{defi}\label{def:width} The width $w(M)$ of the convex core of $M$ is the supremum of the length of a time-like geodesic contained in $\mathcal{C}(\Lambda_{\rho})$. 
\end{defi}

We can give an equivalent definition by introducting a time-like distance in $AdS_{3}$. Given two points $x,y \in AdS_{3}$, we denote with $\gamma_{x,y}$ the unique geodesic connecting the two points. We define
\[
	d_{t}: AdS_{3} \times AdS_{3} \rightarrow \R_{\geq 0}
\]
as
\[
	d_{t}(x,y)=\begin{cases} \length(\gamma_{x,y}) \ \ \ \ \ \ \ \  \text{if $\gamma_{x,y}$ is time-like}\\
					0 \ \ \ \ \ \ \ \ \ \ \ \ \ \ \ \ \ \ \ \ \ \ \text{otherwise}	
		      \end{cases}
\]
where the length of a time-like curve $\gamma:[0,1] \rightarrow AdS_{3}$ is 
\[
	\length(\gamma)=\int_{0}^{1}\sqrt{-\|\dot{\gamma}(t)\|^{2}}dt \ ,
\]
where $\|\dot{\gamma}(t)\|$ denotes the norm of the tangent vector $\dot{\gamma}(t) \in T_{\gamma(t)}AdS_{3}$ with respect to the Lorentzian metric on $AdS_{3}$.
Therefore, Definition \ref{def:width} is equivalent to 
\[
	w(M)=\sup_{\substack{p \in \mathcal{C}(M)^{+} \\ q \in \mathcal{C}(M)^{-}}}d_{t}(p,q)
\]
where $\mathcal{C}(M)^{\pm}$ denotes the upper- and lower-boundary of the convex core. Notice, in particular, that $w(M)=0$, if and only if $M$ is Fuchsian.\\
\\
\indent Seppi found an estimate for the width of the convex core in terms of the principal curvatures of the maximal surface:

\begin{teo}[Theorem 1.B \cite{seppimaximal}] \label{teo:stimawidth} There exist universal constants $C>0$ and $\delta \in (0,1)$ such that if $\Sigma$ is a maximal surface in a GHMC anti-de Sitter manifold with principal curvatures $\lambda$ satisfying $\delta \leq \| \lambda \|_{\infty}<1$, then
\[
	\tan(w(M))\geq \left( \frac{1}{1-\|\lambda\|_{\infty}}\right)^{\frac{1}{C}} \ .
\]
\end{teo}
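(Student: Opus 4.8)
The plan is to control the position of the two boundary components of the convex core through the timelike normal flow emanating from the maximal surface $\Sigma$, whose opposite principal curvatures I write as $\pm\lambda(x)$, and I set $\theta(x)=\arctan\lambda(x)\in[0,\pi/4)$ (so $\|\lambda\|_{\infty}<1$ reads $\sup_x\theta<\pi/4$). First I would parametrise a neighbourhood of $\Sigma$ in $AdS_{3}$ by the timelike normal exponential map $\Phi(x,r)=\exp_{x}(r\,\nu(x))$, $\nu$ the future unit normal, and integrate the Jacobi equation in constant curvature $-1$. Along timelike planes the ambient sectional curvature is $-1$, so the shape operators of the equidistant surfaces obey a Riccati equation whose solution is an angle–addition rule: the principal curvatures of $\Sigma_r$ at $\Phi(x,r)$ are $\tan(r+\theta(x))$ and $\tan(r-\theta(x))$. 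Hence $\Phi$ is a diffeomorphism, and the equidistant surfaces stay spacelike and embedded, exactly for $r\in\bigl(\theta(x)-\tfrac{\pi}{2},\,\tfrac{\pi}{2}-\theta(x)\bigr)$, a focal singularity forming as $r\to\tfrac{\pi}{2}-\theta(x)$. Since $\Sigma$ lifts to $\tilde\Sigma\subset\mathcal{C}(\Lambda_{\rho})$ (as used in Lemma \ref{lm:comparison}) and the convex core is swept by these normal geodesics before their focal points, the whole convex core sits inside this tube, which already pins down the relevant length scale $\tfrac{\pi}{2}-\arctan\lambda$.

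The heart of the matter is then to produce a \emph{lower} bound for the timelike distances $d^{+}(x)$ and $d^{-}(x)$ from $\Sigma$ to the future and past boundary $\partial^{+}\mathcal{C}(\Lambda_{\rho})$ and $\partial^{-}\mathcal{C}(\Lambda_{\rho})$ along the normal through a point $x_{0}$ where $\lambda(x_{0})$ is close to $\|\lambda\|_{\infty}$, since $w(M)\geq d^{+}(x_{0})+d^{-}(x_{0})$. I would certify that $\Phi(x_{0},r)$ remains in the convex hull for $r$ up to nearly the focal time by means of the support–plane description: a spacelike plane bounds $\mathcal{C}(\Lambda_{\rho})$ from the future (resp.\ past) precisely when its boundary curve does not separate $\Lambda_{\rho}$, equivalently when $\Phi(x_{0},r)$ cannot be disjoined from $\Lambda_{\rho}$ by a spacelike plane through it. Tracking how the tangent planes of the surfaces $\Sigma_{r}$ rotate — again governed by $\tan(r\pm\theta)$ — should show that no future support plane is crossed, hence that $\Phi(x_{0},r)$ stays in the convex core, until $r$ is within a controlled gap of $\tfrac{\pi}{2}-\theta(x_{0})$; carrying out the same estimate in the past direction and adding gives a lower bound for $w(M)$ that tends to $\tfrac{\pi}{2}$ as $\|\lambda\|_{\infty}\uparrow 1$, whence $\tan(w(M))\to\infty$.

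The main obstacle is the passage from the smooth normal flow to the genuinely pleated, piecewise totally geodesic boundary of the convex core, and the extraction of the \emph{universal} constants $C$ and $\delta$. The focal time only bounds the injectivity radius of the tube, whereas $\partial^{\pm}\mathcal{C}(\Lambda_{\rho})$ is reached strictly earlier and is not an equidistant surface, so the support–plane argument must be made quantitative without any smoothness of the target; this is the delicate point, and for the Fuchsian surface ($\lambda\equiv 0$) the boundary coincides with $\Sigma$ itself, showing that the distance to the boundary is in no way comparable to the focal time for small curvature. I expect the uniform estimate to require a rescaling/compactness comparison: blowing up $AdS_{3}$ near $x_{0}$ degenerates the ambient metric to Minkowski space and $\Sigma$ to an entire model maximal surface along which the principal curvature approaches $1$ in a definite direction, a setting in which the width of the convex hull of the limiting boundary curve can be estimated directly; a continuity argument, valid once $\|\lambda\|_{\infty}\geq\delta$, then transfers the bound back. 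This indirect comparison is exactly what forces the non-explicit exponent $\tfrac{1}{C}$ rather than a sharp constant, and I would finally check consistency with the expected degeneration $w(M)\to\tfrac{\pi}{2}$.
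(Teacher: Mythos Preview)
The paper does not prove this statement at all: Theorem~\ref{teo:stimawidth} is quoted verbatim from Seppi's paper \cite{seppimaximal} (as ``Theorem~1.B'') and is used as a black box in the one-line proof of the subsequent proposition. There is therefore no argument in the present paper to compare your proposal against.

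As a side remark on the content of your sketch: the normal-flow parametrisation and the angle-addition formula $\tan(r\pm\theta(x))$ for the principal curvatures of equidistant surfaces are indeed the correct ingredients, and they are used in \cite{seppimaximal}. However, your outline conflates two different bounds. The focal-time analysis and the support-plane description you describe naturally give an \emph{upper} bound on $w(M)$ in terms of $\arctan\|\lambda\|_{\infty}$ (this is the easy direction in Seppi's work). The \emph{lower} bound stated here --- that large principal curvature forces large width --- does not follow from tracking when the normal geodesic from a single point $x_{0}$ exits the convex hull; the pleated boundary can in principle be met arbitrarily early along that ray regardless of $\lambda(x_{0})$. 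Seppi's actual argument for this direction goes in the opposite order: he first bounds $\|\lambda\|_{\infty}$ from above by a function of the width (via a Schauder-type estimate relating the principal curvatures to the cross-ratio norm of the boundary map, together with a bound on that norm by $\tan(w(M))$), and the inequality quoted here is obtained by inverting that estimate. Your proposed rescaling/compactness step is in the right spirit for producing universal constants, but the causal direction of the estimate is reversed relative to what you wrote.
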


We consider now a family of GHMC anti-de Sitter manifolds $M_{t}$ parameterised by the ray $(h,tq_{1})\in T^{*}\T(S)$ for a non-zero holomorphic quadratic differential $q_{1}$. 

\begin{prop}The width of the convex core $w(M_{t})$ converges to $\pi/2$ when $t$ goes to $+\infty$.
\end{prop}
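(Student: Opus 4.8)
The plan is to trap $w(M_t)$ between the a priori upper bound $\pi/2$, valid for every GHMC anti-de Sitter manifold, and a lower bound produced by Seppi's estimate (Theorem~\ref{teo:stimawidth}) which tends to $\pi/2$. I will use the standard fact, not reproved here, that $w(M)<\pi/2$ for every GHMC anti-de Sitter manifold $M$: since the convex core $\mathcal{C}(\Lambda_{\rho})$ is contained in the domain of dependence of the acausal curve $\Lambda_{\rho}$, every time-like geodesic segment lying in $\mathcal{C}(\Lambda_{\rho})$ has length $<\pi/2$ (see \cite{seppimaximal}). Consequently $\limsup_{t\to+\infty}w(M_t)\le\pi/2$, and it suffices to prove $\liminf_{t\to+\infty}w(M_t)\ge\pi/2$.

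Second, I would show that the sup-norm of the positive principal curvature $\lambda_t$ of the maximal surface $\Sigma_t\subset M_t$ converges to $1$. On the one hand, Corollary~\ref{cor:bound_lambda} gives $\lambda_t<1$ pointwise, hence, $S$ being compact, $\|\lambda_t\|_{\infty}=\max_S\lambda_t<1$ for all $t$. On the other hand, fix any point $x_0\in S$ that is not a zero of $q_0$; by Corollary~\ref{cor:monotonia}, $\lambda_t(x_0)\to 1$, so that $\|\lambda_t\|_{\infty}\ge\lambda_t(x_0)\to 1$. Therefore $\|\lambda_t\|_{\infty}\to 1$ as $t\to+\infty$ (monotonically, again by Corollary~\ref{cor:monotonia}); in particular $\delta\le\|\lambda_t\|_{\infty}<1$ for all $t$ large, where $\delta\in(0,1)$ is the universal constant appearing in Theorem~\ref{teo:stimawidth}.

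Third, for such $t$ Theorem~\ref{teo:stimawidth} applies and gives
\[
	\tan\big(w(M_t)\big)\ \geq\ \left(\frac{1}{1-\|\lambda_t\|_{\infty}}\right)^{\frac{1}{C}},
\]
whose right-hand side tends to $+\infty$ as $t\to+\infty$, since $1-\|\lambda_t\|_{\infty}\to 0^{+}$ and $1/C>0$. Because $w(M_t)\in[0,\pi/2)$ for every $t$ by the first step, and $\tan$ restricted to $[0,\pi/2)$ is an increasing homeomorphism onto $[0,+\infty)$, the divergence $\tan(w(M_t))\to+\infty$ forces $w(M_t)\to\pi/2$, which is the statement.

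I expect the only genuinely non-routine ingredients to be: (i) the a priori upper bound $w(M)<\pi/2$, which is not established in this excerpt but is a classical feature of the convex core of GHMC anti-de Sitter manifolds; and (ii) the upgrade from the pointwise (and monotone) convergence $\lambda_t\to 1$ supplied by Corollary~\ref{cor:monotonia} to the convergence of the sup-norm $\|\lambda_t\|_{\infty}\to 1$, which I handle by evaluating at one regular point together with the uniform bound $\lambda_t<1$. Everything else reduces to feeding $\|\lambda_t\|_{\infty}$ into Seppi's inequality and passing to the limit.
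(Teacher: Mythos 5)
Your proof is correct and follows essentially the same route as the paper: feed the principal-curvature convergence of Corollary \ref{cor:monotonia} into Seppi's estimate (Theorem \ref{teo:stimawidth}) and conclude via the a priori bound $w(M)<\pi/2$. You are in fact more careful than the paper's own two-line proof, which silently passes from pointwise convergence $\lambda_t\to 1$ to $\|\lambda_t\|_\infty\to 1$ and leaves the upper bound on the width implicit.
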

\begin{proof}By Theorem \ref{teo:stimawidth}, it is sufficient to show that the positive principal curvature $\lambda_{t}$ of the maximal surface $\Sigma_{t}$ embedded in $M_{t}$ converges to $1$. This is exactly the content of Corollary \ref{cor:monotonia} 
\end{proof}

\bibliographystyle{alpha}
\bibliographystyle{ieeetr}
\bibliography{bs-bibliography}

\bigskip

\noindent \footnotesize \textsc{DEPARTMENT OF MATHEMATICS, UNIVERSITY OF LUXEMBOURG}\\
\emph{E-mail address:}  \verb|andrea.tamburelli@uni.lu|

\end{document}